\newcommand{\yx}[1]{\textcolor{red}{YX: #1}}
\newcommand{\RL}[1]{\textcolor{cyan}{RL: #1}}
\newcommand\expt{\ensuremath{\mathbb{E}}}
\providecommand{\norm}[1]{\lVert#1\rVert}
\title{Multiscale Neural Networks for Approximating Green's Functions}
\author{Wenrui Hao\thanks{Department of Mathematics, The Pennsylvania State University, University Park, State College, PA, USA,
  (\texttt{\email{wxh64@psu.edu},\email{yxy5498@psu.edu}}). Research supported by National Institute of General Medical Sciences through grant
1R35GM146894.} \and Rui Peng Li\thanks{Center  for Applied  Scientific Computing,
    Lawrence  Livermore National  Laboratory,  P. O.  Box 808,  L-561,
    Livermore,   CA  94551   {(\texttt{\email{li50@llnl.gov}})}.  This   work  was
    performed under the  auspices of the U.S. Department  of Energy by
    Lawrence    Livermore   National    Laboratory   under    Contract
    DE-AC52-07NA27344 (LLNL-JRNL-870805) and was supported by the LLNL-LDRD program under 
    Project No. 24-ERD-033.}\and Yuanzhe Xi\thanks{Department of Mathematics, Emory University, Atlanta, GA, USA, (\texttt{\email{yxi26@emory.edu},\email{tianshi.xu@emory.edu}}). Research of Y. Xi is supported by NSF DMS-2338904. Research of T. Xu is supported by LLNL-LDRD program under 
    Project No. 24-ERD-033.} \and Tianshi Xu\footnotemark[3] 
    \and Yahong Yang\footnotemark[1]}
\begin{document}
\maketitle

\begin{abstract}
Neural networks (NNs) have been widely used to solve partial differential equations (PDEs) in 
the applications of
physics, biology, and engineering. One effective approach for solving PDEs with a fixed differential operator is learning Green's functions. However, Green's functions are notoriously difficult to learn due to their poor regularity, which typically requires larger NNs and longer training times. In this paper, we address these challenges by leveraging multiscale NNs to learn Green's functions. Through theoretical analysis using multiscale Barron space methods and experimental validation, we show that the multiscale approach significantly reduces the necessary NN size and accelerates training.
\end{abstract}


\begin{keywords} 
Green's function, Barron space, multiscale neural networks, PDEs, domain decomposition, fast solver
\end{keywords}

\begin{AMS}
65N55, 65N80, 68T07
\end{AMS}

\section{Introduction}
In recent years, the application of neural networks (NNs) to solving Partial Differential Equations (PDEs) has spurred the development of a range of innovative methodologies. These methods can generally be classified into two main categories: function learning and operator learning. The function learning methods, such as Physics-Informed NNs (PINNs) \cite{raissi2019physics}, the Deep Ritz Method \cite{weinan2018deep}, the Deep Galerkin Method \cite{sirignano2018dgm}, and approaches based on random features \cite{chen2022bridging,dong2023method,sun2024local}, utilize NNs to directly approximate the solutions using specifically designed loss functions.

A significant limitation of the function learning approaches is the need to train a separate NN for each PDE if it differs from the one the network was originally trained on.
On the other hand, the operator learning methods, such as DeepONet \cite{Lu2019LearningNO} and Fourier Neural Operator (FNO) \cite{li2020fourier}, focus on learning the operator that maps between the PDE parameters and their corresponding solutions. These methods are more general and do not require retraining for different PDEs, provided that the differential operator remains unchanged. 
However, the challenges of operator learning lie in handling mappings between infinite-dimensional spaces, which pose significant learning difficulties for NNs. To address these issues, we consider Green's function methods for solving PDEs in this paper as this approach does not require retraining the NN when the source function changes. 

Although there are many methods for solving PDEs via neural network structures, training remains one of the most difficult parts. The reason is that the loss function for neural networks is non-convex, making it challenging to find the global minimum. Several methods have been developed to reduce these training complexities. One approach fixes the nonlinear part and solves only the linear parameters, known as the random feature model method \cite{chen2022bridging,dong2023method,sun2024local}. Another approach involves initially finding the global minimum for a simpler neural network and then gradually smoothly increasing the network complexity. This is referred to as the greedy algorithm \cite{siegel2023greedy} or the homotopy method \cite{yang2023homotopy,zheng2024hompinns}. Another technique is to divide the neural network into several parts (stacking several simple neural networks) and train the simpler networks one by one to achieve the overall goal. Alternatively, optimizing the direction of gradient descent can improve the training dynamics, as discussed in \cite{ainsworth2022active,kingma2014adam}. 
Adam \cite{KingBa15} or SGD are frequently employed to address the resulting optimization problems in both variational and $L_2$-minimization frameworks. Furthermore, methods like Gauss-Newton and Newton type methods have been adapted to train the neural networks \cite{chen2022randomized,hao2024gauss,nltgcr}.
Lastly, using better datasets or applying known data with different levels of fidelity properly can enhance training performance \cite{howard2023multifidelity}. 
While various methods exist to improve training performance for solving PDEs, training Green's functions presents unique challenges. Their low regularity and the presence of multiple scales make it difficult to design a neural network capable of effectively learning them. Further details will be provided in the following sections of the paper.


In particular, we consider the linear PDEs in the following form:
\begin{equation}
\begin{cases}
\fL u(\vx) = g_1(\vx), & \vx \in \Omega \\
\fB u(\vx) = g_2(\vx), & \vx \in \partial \Omega
\end{cases}\label{poissongen}
\end{equation}
where $\Omega$ and $\partial \Omega$ are the domain and the domain boundary  respectively, \(g_1(\vx)\) and $g_2(\vx)$ are functions in \(\mathbb{R}\), \(u: \mathbb{R}^d \to \mathbb{R}\), $\fL$ is a linear differential operator, and $\fB$ is the operator for specifying appropriate boundary
conditions (BCs). For impulse source point \(\vy \in \Omega\), the Green's function $G(\vx, \vy)$ satisfies the following equations {(The formal definition of the Green's function for divergence elliptic equations is given in
Definition~\ref{def:green}.)}:
\begin{equation}
\begin{cases}
\fL G(\vx, \vy) = \delta(\vx - \vy), & \vx \in \Omega \\
\fB G(\vx, \vy) = 0, & \vx \in \partial \Omega
\end{cases}\label{poisson_greengen}
\end{equation}
where \(\delta(\vx)\) denotes the Dirac delta {distribution}
and both \(\fB\) and \(\fL\) operate on the \(\vx\) variable only.
Once $G(\vx, \vy)$ in \cref{poisson_greengen} is known, the weak solution $u$ of \cref{poissongen}  can be easily computed by the integration of $G$
with arbitrary forcing terms and BCs, $g_1$ and $g_2$. The formal definition of the weak solutions for divergence elliptic equations is given in Definition \ref{def:weak}.

Deriving analytical expressions for Green's functions is challenging in most cases, even for simple domains,
whereas NNs can be promising for approximating  Green's functions. Despite Green's functions having twice the dimensionality of the input domain, the same NN model can be effectively applied across different forcing terms without requiring retraining. This approach represents a specialized form of operator learning or an adaptive extension of PINNs that accommodates variations in forcing terms and BCs.
Recently, many approaches have been proposed for efficient learning of Green's functions. In \cite{gin2021deepgreen}, the authors establish a framework to learn Green's functions to solve PDEs, but they do not consider the regularity issues that bring difficulties in training. In \cite{lin2021neural}, fundamental solutions are used to address the regularity problem for learning Green's functions; however, for most PDEs, the fundamental solution is unknown. In \cite{ji2023deep}, more complex NNs, such as U-net, are used for training Green's functions. Additionally, in \cite{teng2022learning}, the domain is divided into several parts to reduce training complexity. In \cite{boulle2023learning}, the authors focus on sampling strategies to learn Green's functions rather than the structure of NNs. { \cite{wimalawarne2023learning,zhang2022mod} learn the Green's function by sampling both the source term and the corresponding solution, treating the Green operator within an operator-learning framework.  Since this entails learning in infinite-dimensional spaces, the method is computationally costly and exhibits relatively large errors.}

The primary challenge in Green's function learning approach stems from the low regularity of Green's functions due to the poor regularity of the \(\delta\) function   \cite{evans2022partial}. 
For example, the Green's function for the Poisson equation  on the unit ball in \(\mathbb{R}^d\) has the  explicit form:
\[
G(\vx, \vy)=\Phi(\vy-\vx)-\Phi(\Vert\vx\Vert_2 (\vy-\bar{\vx})),
\]
where  \(\Phi\) is the fundamental solution, \(\vx, \vy \in B_{1,\Vert\cdot\Vert_2}\), $B_{r,\Vert\cdot\Vert_2}(\vx)\subset\sR^d$ denotes a closed ball centered at $\vx\in\sR^d$ with radius $r$, measured in the Euclidean distance, and \(\bar{\vx} = \vx/\Vert \vx\Vert_2^2\).
It is straightforward to observe that $G(\vx, \vy)$ has singularities at \(\vx = \vy\) as \(\Phi\) itself is singular at \(\vzero\).

{In neural-network approximation theory \cite{mhaskar1996neural,yarotsky2017error,yarotsky2020phase,yang2023nearly,yang2024deeper}, the error estimate is typically written in the form \( \|G-G_{\vtheta}\|_{W^{s_{1},p}(\Omega\times\Omega)}=\mathcal{O}\bigl(\|G\|_{W^{s_{2},p}(\Omega\times\Omega)}\,N^{-(s_{2}-s_{1})/d}\bigr) \) for $1\le p\le\infty$, where \(N\) is the number of trainable parameters in the neural network $G_{\vtheta}$, \(s_{1}\ge 0\) specifies the norm in which the error is measured, and \(s_{2}>s_1\) is the Sobolev regularity of the Green’s function.  The exponent \(s_{2}\) depends on the coefficients and boundary conditions of the underlying PDE; see \cite{taylor2013green,hofmann2007green,kim2019green,gruter1982green} for detailed regularity results.}
When the Green's function has lower regularity, meaning that \(s_2\) is small and \(\|G\|_{W^{s_2,p}(\Omega\times\Omega)}\) is large, \(N\) must be significantly large to achieve a small approximation error.
The results in \cite{weinan2018deep,guhring2021approximation,lu2021priori,montanelli2019new,siegel2023characterization} further indicate that a large Sobolev norm of the target functions not only necessitates a large number of parameters but also increases the overall training complexity. This phenomenon has also been examined in generalization analysis, which shows that as the number of parameters increases, the complexity of networks also grows. This increased complexity demands more sample points for effective learning, thereby further escalating the overall training complexity \cite{schmidt2020nonparametric, suzuki2018adaptivity, yang2024deeper}. Thus, target functions with low regularity, such as Green's functions, necessitate a larger number of NN parameters. This, in turn, increases both the training complexity and the data requirements. Another challenge in learning Green's functions is their multiscale nature. Specifically, near $\vx=\vy$, the Green's function exhibits low regularity, while outside this region, it demonstrates a high-regularity structure \cite{bebendorf2003existence}. This disparity in regularity makes training neural networks difficult \cite{he2015delving}. When a function involves two distinct regularity scales, and the neural network's initial parameters are tuned to one scale, the network can take significantly longer to learn the low-regularity regions \cite{liu2020multi, wang2020multi, zhou2013causal}. 


A common practice in Green's function learning is to first approximate the {$\delta$ distribution} with a multidimensional Gaussian \cite{adams2003sobolev} {in the sense of distributions}, i.e.,
\begin{equation}
\label{eq: multi gauss}
\delta(\vx-\vy) \approx
\mathcal{N}_\varepsilon(\vx,\vy)=\left(\frac{1}{\varepsilon\sqrt{\pi}}\right)^d\exp\left(-\frac{\Vert\vx-\vy\Vert_2^2}{\varepsilon^2}\right),
\end{equation}
from which we approximate \cref{poisson_greengen} as
\begin{equation}
\begin{cases}
\fL G_\varepsilon(\vx, \vy)) = \mathcal{N}_\varepsilon(\vx,\vy), & \vx \in \Omega \\
\fB G_\varepsilon(\vx, \vy) = 0, & \vx \in \partial \Omega
\end{cases}\label{eq:green gen approx}
\end{equation}
where $\varepsilon$  must be carefully chosen to achieve a good approximation accuracy. 
Nevertheless, the right-hand side term $\mathcal{N}_\varepsilon(\vx,\vy)$ still exhibits poor regularity.

In this paper, we propose a \emph{multiscale} NN (MSNN) approach designed to fully exploit the multiscale structure inherent in Green's functions. For related work in the literature on  MSNN, see e.g., \cite{fan2019multiscale, liu2020multi, nah2017deep, wang2020multi}.
\emph{The basic idea in our approach is to utilize two NNs at different scales}: one to address the low regularity of $G(\vx, \vy)$ near $\vx=\vy$, while a second one approximates $G(\vx, \vy)$ across the entire domain. 
More specifically, the MSNN, denoted by $\phi(\vz, \vtheta)$ with \(\vz=(\vx,\vy)\), consists of two components:
\begin{equation}\label{nm}
    \begin{aligned}
    \phi(\vz;\vtheta)&=\frac{1}{n}\sum_{i=1}^{n}a_i\sigma_\varepsilon\left(\vw_i\cdot\vz+b_i\right)+\frac{1}{m}\sum_{i=n+1}^{n+m} a_i\sigma\left(\vw_i\cdot\vz+b_i\right) \\
    &\equiv \phi_1(\vz;\vtheta_1)+\phi_2(\vz,\vtheta_2)
    \end{aligned}
\end{equation}
where \(\vtheta=\{a_i\}_{i=1}^{m+n}\cup \{b_i\}_{i=1}^{m+n}\cup \{\vw_i\}_{i=1}^{m+n}\cup \{c\}\) 
are the parameters associated with the $m+n$ neurons and $c$ is combined from the output biases from both $\phi_1$ and $\phi_2$.
What distinguishes the two components of MSNN is the use of different activation functions, for which we have
\begin{equation}
\sigma_\varepsilon(\vx):=\varepsilon^\alpha\sigma\left(\frac{\vx}{\varepsilon^\beta}\right),
\label{eq:activation}
\end{equation} with the parameters $\alpha,\beta$ depending on the PDEs, and $\varepsilon$ is the same in \cref{eq: multi gauss}. This network structure introduces two distinct scales within the MSNN.

Following the terminology commonly used in MSNN, we refer to $\phi_1(\vz; \vtheta_1)$ as the \emph{large-scale} NN, which is designed to capture the low regularity of the Green's function near $\vx = \vy$.
On the other hand, $\phi_2(\vz; \vtheta_2)$ is the \emph{small-scale} NN, which instead focuses on learning the smooth regions of the Green's function and the residual of the first part. 
Recent studies have shown that relying solely on small-scale NNs is ineffective for accurately approximating the Green's function near $\vx=\vy$ \cite{teng2022learning}, since the NNs would require a large number of parameters
to achieve a good approximation.
Conversely, using only large-scale NN is inefficient as well, because  Green's functions exhibit high smoothness, characterized by a small high-order Sobolev norm when $\vx$ is away from $\vy$ \cite{bebendorf2003existence}. This smoothness does not necessitate large-scale networks and would lead to an unnecessary increase in the number of neurons. 
This is a phenomenon referred to as the low-rank matrix structure in \cite{bebendorf2003existence}.
We will further elaborate on these points later in the paper, using techniques from Barron space theory \cite{barron1993universal, weinan2022some, ma2020towards, siegel2022sharp, siegel2023characterization,li2024two} to demonstrate that the proposed multiscale NN structure in \cref{nm} can enhance the learning efficiency by using much fewer neurons
{with single-scale parameters} and thus significantly improve the training speed. 
Specifically, a large-scale NN is used to approximate the low-regularity region (cf. \cref{bound}), while 
a small-scale NN is subsequently employed
to approximate the residual of the first NN more efficiently (cf. \cref{bound2}). 
By ``more efficiently'', we  
mean that the  MSNN requires fewer neurons and that the magnitude of its parameters remains moderate compared to single-scale NNs. {The proposed multiscale NN shares with Fast Multipole Method (FMM) and hierarchical-matrix methods the use of a hierarchical decomposition of the Green’s function \cite{beatson1997short,GREENGARD1987325,ying2012pedestrian,smash,ddh2,hbook}. The purpose, however, is fundamentally different: FMM and hierarchical matrix methods assume the Green’s function is known and accelerates the evaluation of its associated potential. By contrast, our approach addresses the setting where the Green’s function is \emph{unknown} and must be learned directly from the governing PDE. Once learned, the approximation could be used in conjunction with FMM or other fast summation schemes for efficient evaluation.} Further details will be provided after \cref{bound2}.

Our main contributions of this work are summarized as follows:

\begin{itemize}
\item We propose an MSNN structure for learning Green's functions that effectively addresses the challenges of their low regularity. This approach enables the use of fewer parameters and eliminates the need for excessively large parameter magnitudes, making it more efficient than traditional single-scale NNs.

\item We substantiate the advantage of using MSNN through Barron space methods, introducing the concept of the multiscale Barron space method and providing approximation error bounds for Green's functions without encountering the curse of dimensionality in the \(L^\infty\)-error with second-order derivative information. 
To the best of our knowledge, this is the first work to explore the multiscale Barron space and its approximation in the context of errors with second-order derivative information.

\item  We validate our theoretical findings and methods with numerical experiments. It is evident that, with the same number of parameters, the proposed MSNN not only learns faster but also achieves better accuracy within the same training time compared to single-scale NNs.
\end{itemize}

The remaining sections are organized as follows. In \cref{preliminaries}, we provide the necessary background and preliminaries. In \cref{sec:mnn}, we introduce the proposed MSNN and prove its approximation properties. Numerical experiments are presented in \cref{num}, and we conclude in \cref{conclusion}.


\section{Preliminaries}
    \label{preliminaries}
   In this section, we introduce the notations, the Rademacher complexity and the model PDE problem that will be used in our theoretical analysis. 

\subsection{Notations}
The following notations will be used throughout the paper:
\begin{enumerate}[1.]
\item Matrices are denoted by bold uppercase letters. For example, $\vA\in\sR^{m\times n}$ is a real matrix of size $m\times n$ and $\vA^\T$ denotes the transpose of $\vA$.
\item Vectors are denoted by bold lowercase letters. For example, $\vv\in\sR^m$ is a column vector of size $m$. Furthermore, we denote by $\vv(i)$ the $i$-th element of $\vv$.
%
\item Let $B_{r,\Vert\cdot\Vert_2}(\vx)\subset\sR^d$ be the closed ball centered at $\vx$ in $\sR^d$ with radius $r$ measured by the Euclidean distance. 
%
\end{enumerate}
\subsection{Rademacher complexity}
In order to prove our main theoretical result in \cref{bound}, we first introduce the concept of the Rademacher complexity. 
\begin{defi}[{Rademacher complexity \cite{anthony1999neural}}]\label{defrad}
				Given a set of samples  $S=\{\vz_1,\vz_2,$ $\ldots,\vz_m\}$ on a domain $\fZ$, and a class $\fF$ of real-valued functions defined on $\fZ$, the empirical Rademacher complexity of $\fF$ in $S$ is defined as \[\tR_S(\fF):=\frac{1}{m}\expt_{\Xi_m}\left[\sup_{f\in\fF}\sum_{i=1}^m\xi_if(\vz_i)\right],\]where $\Xi_m:=\{\xi_1,\xi_2,\ldots,\xi_m\}$ is a set of $m$ independent random samples drawn from the Rademacher distribution, i.e., $\rmP(\xi_i=+1)=\rmP(\xi_i=-1)={1}/{2}$, for $i=1,2,\ldots,m.$ 
			\end{defi}

The following lemma bounds the expected value of the largest gap between the expected value of a function $f$ and its empirical mean by twice the expected Rademacher complexity.
			\begin{lem}[{\cite[Lemma 26.2]{shalev2014understanding}}]\label{connect1}
				Let $\fF$ be a set of functions defined on $\fZ$. Then \[\expt_{S\sim \rho^m}\sup_{f\in\fF}\left(\frac{1}{m}\sum_{i=1}^mf(\vz_i)-\expt_{\vz\sim\rho} f(\vz)\right)\le 2\expt_{S\sim\rho^m} \tR_S(\fF),\]where $S = \{\vz_1, \vz_2, \ldots, \vz_m\}$ is a set of $m$ independent random samples drawn from the distribution $\rho$.
			\end{lem}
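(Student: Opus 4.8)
The plan is to prove this bound by the classical \emph{symmetrization} argument: I would introduce an independent ``ghost'' copy of the sample, rewrite the population mean as an empirical mean over the ghost sample, and then insert Rademacher signs by exploiting exchangeability. First I would draw a second sample $S'=\{\vz_1',\ldots,\vz_m'\}$ from $\rho^m$, independent of $S$, and use the identity $\expt_{\vz\sim\rho}f(\vz)=\expt_{S'\sim\rho^m}\frac{1}{m}\sum_{i=1}^m f(\vz_i')$. Substituting this into the left-hand side and invoking the convexity of the supremum together with Jensen's inequality to pull $\expt_{S'}$ outside of $\sup_{f\in\fF}$, I obtain
\[
\expt_{S}\sup_{f\in\fF}\left(\frac{1}{m}\sum_{i=1}^m f(\vz_i)-\expt_{\vz\sim\rho}f(\vz)\right)\le \expt_{S,S'}\sup_{f\in\fF}\frac{1}{m}\sum_{i=1}^m\bigl(f(\vz_i)-f(\vz_i')\bigr).
\]

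Next I would introduce the Rademacher variables. Since $\vz_i$ and $\vz_i'$ are i.i.d., for each fixed sign pattern $(\xi_1,\ldots,\xi_m)\in\{\pm1\}^m$ the configuration obtained by swapping $\vz_i\leftrightarrow\vz_i'$ precisely when $\xi_i=-1$ has the same joint law as $(S,S')$. Hence multiplying each difference $f(\vz_i)-f(\vz_i')$ by $\xi_i$ leaves the expectation unchanged, and averaging over $\Xi_m$ drawn from the Rademacher distribution preserves it as well, giving
\[
\expt_{S,S'}\sup_{f\in\fF}\frac{1}{m}\sum_{i=1}^m\bigl(f(\vz_i)-f(\vz_i')\bigr)=\expt_{S,S',\Xi_m}\sup_{f\in\fF}\frac{1}{m}\sum_{i=1}^m\xi_i\bigl(f(\vz_i)-f(\vz_i')\bigr).
\]

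Finally I would split the supremum using subadditivity, $\sup_f\sum_i\xi_i(f(\vz_i)-f(\vz_i'))\le \sup_f\sum_i\xi_i f(\vz_i)+\sup_f\sum_i(-\xi_i)f(\vz_i')$, and take expectations. Because $-\xi_i$ has the same Rademacher distribution as $\xi_i$ and $S,S'$ are identically distributed, each of the two resulting terms equals $\expt_{S}\tR_S(\fF)$ by \cref{defrad}, which produces the factor of $2$ and completes the bound.

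The step I expect to be the main obstacle is the insertion of the Rademacher signs: making the exchangeability argument rigorous requires carefully verifying that, conditionally on each sign pattern, swapping $\vz_i$ and $\vz_i'$ preserves the joint law, so that the multiplication by $\xi_i$ is genuinely distribution-neutral. Once this symmetry is established, the Jensen step and the final splitting are routine.
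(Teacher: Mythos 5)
Your proposal is correct and is essentially the proof the paper relies on: the statement is quoted from \cite[Lemma 26.2]{shalev2014understanding}, whose proof is exactly this classical symmetrization argument (ghost sample plus Jensen, sign insertion by exchangeability, then splitting the supremum to get the factor of $2$). The exchangeability step you flag as the main obstacle is handled just as you describe—conditionally on each sign pattern, swapping $\vz_i\leftrightarrow\vz_i'$ preserves the joint law because the pairs are i.i.d.—so no gap remains.
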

\subsection{The Poisson Problem with Dirichlet Boundary Condition}
{We  consider the following Poisson problem
\begin{equation}
\begin{cases}
\nabla \cdot (\vA(\vx) \nabla u(\vx)) = f(\vx), & \vx \in \Omega \\
u(\vx) = 0, & \vx \in \partial \Omega
\end{cases}
\label{poisson}
\end{equation}
where \(f(\vx)\in L^\infty(\Omega)\) and \(u(\vx)\in H^1_0(\Omega)\), the closure of $C_c^\infty(\Omega)$ in $H^1(\Omega)$, and  
\(\vA=(a_{ij})_{i,j=1}^{d}\in L^\infty\!\bigl({\Omega};
\mathbb R^{d\times d}\bigr)\) is uniformly elliptic:  
\[\lambda|\boldsymbol\xi|^{2}\le \vA(\vx)\vxi\cdot \vxi:=\sum_{i,j=1}^{d}a_{ij}(\vx)\,\xi_{i}\xi_{j}\] for all \(\vx\in\Omega\) and \(\boldsymbol\xi\in\mathbb R^{d}\), with a positive constant \(\lambda\), and domain $\Omega\subset[0,1]^d$ is an open set. 
\begin{defi}\label{def:weak}
    A function \(u\in H^{1}_{0}(\Omega)\) is called a weak solution of \eqref{poisson} if  and only if
\[
  \int_{\Omega}\vA(\vx)\nabla u(\vx)\!\cdot\!\nabla\phi(\vx)\,\D\vx
  =\int_{\Omega}f(\vx)\,\phi(\vx)\,\D\vx,
  \qquad\forall\phi\in C_{c}^{\infty}(\Omega).
\]
\end{defi} By \cite[Theorem 3, p.\,301]{evans2022partial}, problem \eqref{poisson} admits a unique weak solution for every right-hand side \(f\in L^{\infty}(\Omega)\subset L^{2}(\Omega)\).

The Green's function of \eqref{poisson} is defined as follows.
\begin{defi}\label{def:green}
A function \(G:\Omega\times\Omega\to\mathbb R\) is called the
\emph{Green’s function} for problem~\eqref{poisson} if, for every
\(f\in C_c^{\infty}(\Omega)\), the representation formula
\[
  u(\vx)=\int_{\Omega} G(\vx,\vy)\,f(\vy)\,\D\vy,
  \qquad \vx\in\Omega,
\]
produces the unique weak solution \(u\) of
\eqref{poisson}.
\end{defi}

\begin{lem}[\cite{kim2019green}]\label{lem:green_exist_unique}
Assume \(\vA=(a_{ij})_{i,j=1}^{d}\in L^\infty\!\bigl({\Omega};
\mathbb R^{d\times d}\bigr)\) is uniformly elliptic and
\(\Omega\subset\mathbb R^{d}\) \emph{(}with \(d\ge 3\)\emph{)} is bounded.  
Then the Green’s function \(G\) defined in
Definition~\ref{def:green} exists and is unique. Furthermore, for any $f\in L^\infty(\Omega)$, the representation formula
\[
  u(\vx)=\int_{\Omega} G(\vx,\vy)\,f(\vy)\,\D\vy,
  \qquad \vx\in\Omega,
\]
produces the unique weak solution \(u\in H^1_0(\Omega)\) of
\eqref{poisson}.
\end{lem}
The assumption \(f \in L^\infty(\Omega)\) ensures that the integral in the representation formula is well-defined, based on the regularity of \(G(\vx,\vy)\) established in \cite[Proposition 5.3]{kim2019green}.  
Improving the regularity of the domain \(\Omega\) and the coefficient matrix \(\vA\) can lead to enhanced regularity of the Green's function \(G(\vx,\vy)\), which in turn allows the representation formula to remain well-defined for a broader class of source terms \(f\). The existence and uniqueness of the Green’s function when \(d=2\) can be found in \cite{taylor2013green}, where the notation and hypotheses are adjusted accordingly.
In this work, we focus on the most general setting; more refined regularity-based extensions will be explored in future work.

In this paper, we tend to approximate $G(\vx,\vy)$ by solving the following PDEs: For any impulse source point \(\vy \in \Omega\), \begin{equation}
    \begin{cases}
    \nabla_{\vx}\cdot(\vA(\vx) \nabla_{\vx} G_\varepsilon(\vx,\vy)) = 
    \mathcal{N}_\varepsilon(\vx,\vy), & \vx \in \Omega \\
    G_\varepsilon(\vx,\vy) = 0, & \vx \in \partial \Omega.
    \end{cases}\label{approximatedd}
\end{equation} Recall that \begin{equation}\mathcal{N}_\varepsilon(\vx,\vy)=\left(\frac{1}{\varepsilon\sqrt{\pi}}\right)^d\exp\left(-\frac{\Vert\vx-\vy\Vert_2^2}{\varepsilon^2}\right)=:\psi_\varepsilon(\vx-\vy),\label{psi}\end{equation}for a small constant $\varepsilon>0$. For every fixed \(\vy\in\Omega\) the approximate problem
\eqref{approximatedd} has a unique weak solution
\(G_{\varepsilon}(\,\cdot\,,\vy)\in H^{1}_{0}(\Omega)\) by
Lemma~\ref{lem:green_exist_unique}.
The next lemma shows that, as \(\varepsilon\to0\),
these approximate Green's functions converge to the true Green's function
\(G(\vx,\vy)\) in the weak sense.
\begin{prop}\label{prop:u_eps_convergence}
Let \(\vA=(a_{ij})_{i,j=1}^{d}\in L^{\infty}\bigl(\Omega;
\mathbb R^{d\times d}\bigr)\) be uniformly elliptic. Denote by \(G(\vx,\vy)\) the Green's function of \eqref{poisson} and by \(G_\varepsilon(\vx,\vy)\) the Green's function of the regularized problem \eqref{approximatedd}.
For any \(f\in L^{\infty}(\Omega)\) define
\[
  u(\vx)=\int_{\Omega}G(\vx,\vy)\,f(\vy)\,\D\vy,
  \qquad
  u_{\varepsilon}(\vx)=\int_{\Omega}G_{\varepsilon}(\vx,\vy)\,
       f(\vy)\,\D\vy.
\]
Then \(u_{\varepsilon}\in H^{1}_{0}(\Omega)\) and the following hold:
\begin{enumerate}\setlength{\itemsep}{4pt}
\item[\textup{(i)}]
      \(\displaystyle\lim_{\varepsilon\to0}
        \|u-u_{\varepsilon}\|_{H^{1}(\Omega)}=0.\)

\item[\textup{(ii)}]
      If in addition \(f\in H^{1}_{0}(\Omega)\), then
      \[
         \|u-u_{\varepsilon}\|_{H^{1}(\Omega)}
         \;\le\;
         C\,\varepsilon\,\|f\|_{H^{1}(\Omega)},
      \]
      where the constant \(C>0\) is independent of
      \(f\), \(u\), and \(\varepsilon\).
\end{enumerate}
\end{prop}
\begin{proof}Based on the definition of the Green's function $G$ and $\psi_\varepsilon\in L^\infty(\sR^n)$, we have that the weak solution of (\ref{approximatedd}) can be read as \begin{equation}
    G_{\varepsilon}(\vx,\vy)=\int_{\Omega}G(\vx,\vz)\psi_\varepsilon(\vz-\vy)\D \vz.
\end{equation} Then we have that \begin{equation}
    u_\varepsilon(\vx):=\int_{\Omega}G_\varepsilon(\vx,\vy)f(\vy)\D \vy=\int_{\Omega}\int_{\Omega}G(\vx,\vz)\psi_\varepsilon(\vz-\vy)f(\vy)\D \vz \D \vy.\label{Fin}
\end{equation}Then based on the Fubini's theorem, we have \[u_\varepsilon(\vx)=\int_{\Omega}G(\vx,\vz)(\psi_\varepsilon*f)(\vz)\D \vz=\int_{\Omega}G(\vx,\vy)(\psi_\varepsilon*f)(\vy)\D \vy. \]Therefore, based on $f-\psi_\varepsilon*f\in L^\infty(\Omega)$, $v=u-u_\varepsilon\in H^1_0(\Omega)$ is the weak solution of \begin{equation}\label{poissonerror}
\begin{cases}
\nabla\!\cdot\!\bigl(\vA(\vx)\nabla v(\vx)\bigr)=f-\psi_\varepsilon*f, & \vx\in\Omega,\\
v(\vx)=0, & \vx\in\partial\Omega.
\end{cases}
\end{equation}By the interior regularity of weak solutions \cite[Lemma 4.7]{kim2019green},
\[
  \|\nabla u_\varepsilon-\nabla u\|_{L^{2}(\Omega)}
  \;\le\;
  C\!\left(
      \|f-\psi_\varepsilon*f\|_{L^{2}(\Omega)}
      +\|u_\varepsilon-u\|_{L^{2}(\Omega)}
   \right),
\]
and by the \(L^{2}\)-bounded inverse estimate
\cite[Theorem 6, p.\,306]{evans2022partial},
\[
  \|u_\varepsilon-u\|_{L^{2}(\Omega)}
  \;\le\;
  C\,\|f-\psi_\varepsilon*f\|_{L^{2}(\Omega)} .
\]Putting the two bounds together we obtain
\begin{equation}\label{eq:H1_error}
  \|u_\varepsilon-u\|_{H^{1}(\Omega)}
  \;\le\;
  C\,\|f-\psi_\varepsilon*f\|_{L^{2}(\Omega)},\footnote{%
  The constant \(C\) may change from line to line.}
\end{equation}
where \(C>0\) is independent of \(f\), \(u\), and \(\varepsilon\). As for \(\|f-\psi_\varepsilon*f\|_{L^{2}(\Omega)}\) with $f\in L^2(\Omega)$, let \(h\in C^{\infty}_{c}(\mathbb{R}^{d})\) be a non-negative bump function supported in \([-1,1]^{d}\) with
\(h(\vx)=1\) on \(\bigl[-\tfrac12,\tfrac12\bigr]^{d}\) and \(0\le h\le1\) everywhere.
Define \(g_{\varepsilon}(\vx)=h(\vx/\sqrt{\varepsilon})\,\psi_{\varepsilon}(\vx)\), then
\begin{equation}
  \|f-\psi_{\varepsilon}*f\|_{L^{2}(\Omega)}
  \;\le\;
  \underbrace{\Bigl\|f-\frac{g_{\varepsilon}*f}{\|g_{\varepsilon}\|_{L^{1}(\sR^d)}}\Bigr\|_{L^{2}(\Omega)}}_{R_1}
  +\underbrace{\Bigl\|\Bigl(\frac{g_{\varepsilon}}{\|g_{\varepsilon}\|_{L^{1}(\sR^d)}}-\psi_{\varepsilon}\Bigr)*f\Bigr\|_{L^{2}(\Omega)}}_{R_2}.\label{mollifier}
\end{equation}

The term \(R_{1}\) vanishes as \(\sqrt{\varepsilon}\to0\) because the kernel
\(g_{\varepsilon}/\|g_{\varepsilon}\|_{L^{1}(\mathbb{R}^{d})}\)
is a compactly supported \((-\sqrt{\varepsilon},\sqrt{\varepsilon})\)-mollifier satisfying
\(\int_{\mathbb{R}^{d}} g_{\varepsilon}(\vx)/\|g_{\varepsilon}\|_{L^{1}}\D \vx = 1\)
for every \(\varepsilon>0\); hence
\cite[Theorem~2.29]{adams2003sobolev} applies.  For the second term, we estimate
\[
\begin{aligned}
  R_2
 &\le
  \Bigl(
    \|g_{\varepsilon}-\psi_{\varepsilon}\|_{L^{1}(\sR^d)}
    +\Bigl|\frac{1}{\|g_{\varepsilon}\|_{L^{1}(\sR^d)}}-1\Bigr|
      \,\|\psi_{\varepsilon}\|_{L^{1}(\sR^d)}
  \Bigr)\,\|f\|_{L^{2}(\Omega)} \\
 &\le
  \Bigl(
    \|\psi_{\varepsilon}\|_{L^{1}\bigl(\mathbb{R}^{d}\backslash\left[-\tfrac{\sqrt{\varepsilon}}{2},\tfrac{\sqrt{\varepsilon}}{2}\right]^{d}\bigr)}
    +\Bigl|\frac{1}{\|g_{\varepsilon}\|_{L^{1}(\sR^d)}}-1\Bigr|
  \Bigr)\,\|f\|_{L^{2}(\Omega)}.
\end{aligned}
\]

Since \(\psi\) is a Gaussian mollifier, it satisfies
\[
  \lim_{\varepsilon\to0}\|\psi_{\varepsilon}\|_{L^{1}\!\left(\left[-\tfrac{\sqrt{\varepsilon}}{2},\tfrac{\sqrt{\varepsilon}}{2}\right]^{d}\right)}
  \;=\;
  1
  \quad\text{and}\quad
  \lim_{\varepsilon\to 0}\|g_{\varepsilon}\|_{L^{1}(\mathbb R^{d})}=\|\psi_{\varepsilon}\|_{L^{1}(\mathbb R^{d})}=1,
\]
so the two \(L^{1}\)-differences in parentheses decay to $0$ as
\(\varepsilon\to0\).
Hence \(\|f-\psi_{\varepsilon}*f\|_{L^{2}(\Omega)}\to0\), which completes the proof of~(i). 

If, moreover, \(f\in H^{1}_{0}(\Omega)\), extend \(f\) to
\(\widetilde{f}\in H^{1}(\mathbb{R}^{d})\) by setting
\(\widetilde{f}=f\) on \(\Omega\) and \(\widetilde{f}=0\) on
\(\mathbb{R}^{d}\setminus\Omega\).  Then
\[
  \|f-\psi_{\varepsilon}*f\|_{L^{2}(\Omega)}
  =\|\widetilde{f}-\psi_{\varepsilon}*\widetilde{f}\|_{L^{2}(\Omega)}
  \le \|\widetilde{f}-\psi_{\varepsilon}*\widetilde{f}\|_{L^{2}(\mathbb{R}^{d})}.
\]
Applying \cite[Lemma 3.5 (iv), p.\,98]{majda2002vorticity} (see
Remark~\ref{proof:pro} for details) gives
\begin{equation}
  \|\widetilde{f}-\psi_{\varepsilon}*\widetilde{f}\|_{L^{2}(\mathbb{R}^{d})}
  \;\le\;
  C\,\varepsilon\,\|\widetilde{f}\|_{H^{1}(\mathbb{R}^{d})}
  =C\,\varepsilon\,\|f\|_{H^{1}(\Omega)},
 \label{(ii)proof}   
\end{equation}
which proves part~(ii).
\end{proof}
\begin{rmk}\label{proof:pro}
\begin{enumerate}\setlength{\itemsep}{4pt}
\item
In \eqref{(ii)proof} we invoke \cite[Lemma 3.5 (iv), p.\,98]{majda2002vorticity}.  
That lemma does \emph{not} require the mollifier to be compactly supported; it only needs  
\(\int_{\mathbb R^{d}}\psi=1\), \(|\widehat{\psi}(\boldsymbol\xi)-1|\le C|\boldsymbol\xi|^{2}\), and  
\(\widehat{\psi}(\boldsymbol\xi)\le C\).  
These conditions are satisfied by the Gaussian kernel used here.

\item
If one assumes only \(f \in H^1(\Omega)\) (without the zero boundary condition), a similar convergence result may still be obtained by applying an extension operator for Lipschitz domains; see \cite{adams2003sobolev}.  
However, an additional error term appears in the estimate, since near the boundary \(\psi_{\varepsilon} * f \neq \psi_{\varepsilon} * \widetilde{f}\), leading to a boundary discrepancy in \(\|f - \psi_{\varepsilon} * f\|_{L^2(\Omega)}\).  
Moreover, if \(f \in H^k(\Omega)\), the ideal convergence rate should be faster than $\fO(\varepsilon)$.  
A more detailed analysis in this setting will be considered in future work.
\end{enumerate}
\end{rmk}
Next we approximate the regularized Green's function \(G_{\varepsilon}\) by a
smooth neural network \(G_{\boldsymbol\theta}\) and then quantify the
resulting solution error.
\begin{prop}\label{prop:NN_vs_Geps}
    Assume \(\vA\in W^{1,\infty}\!\bigl({\Omega};\mathbb R^{d\times d}\bigr)\)
is uniformly elliptic.  
Let \(G_{\varepsilon}(\vx,\vy)\) be the Green's function of the mollified
problem \eqref{approximatedd}.
Suppose a neural network \(G_{\boldsymbol\theta}(\cdot,\vy)\in
W^{2,\infty}(\Omega)\cap H^1_0(\Omega)\) with \(\sup_{\vy\in\Omega}\|G(\cdot,\vy)\|_{H^1(\Omega)}<\infty\) satisfies
\begin{equation}\label{eq:small_residual}
  \sup_{\vy\in\Omega}
  \Bigl\|
      \mathcal N_{\varepsilon}(\vx,\vy) \;-\;
      \nabla_{\!x}\!\cdot\!\bigl(\vA(\vx)\nabla_{\!x}
        G_{\boldsymbol\theta}(\vx,\vy)\bigr)
  \Bigr\|_{L^2(\Omega)}
  \;\le\;\delta,
\end{equation}
for some \(\delta>0\).
Define
\[
  u_{\varepsilon}(\vx)=\int_{\Omega}G_{\varepsilon}(\vx,\vy)\,f(\vy)\,\D \vy,
  \quad
  u_{\boldsymbol\theta}(\vx)=
      \int_{\Omega}G_{\boldsymbol\theta}(\vx,\vy)\,f(\vy)\,\D \vy,
  \qquad f\in L^{1}(\Omega).
\]
Then \(u_\varepsilon,u_{\boldsymbol\theta}\in H^{1}(\Omega)\) with $u_{\boldsymbol{\vtheta}}$ satisfying the homogeneous Dirichlet boundary condition, and
\[
  \|u_{\varepsilon}-u_{\boldsymbol\theta}\|_{H^{1}(\Omega)}
  \;\le\;C\,\delta\,\|f\|_{L^{1}(\Omega)},
\]
where \(C>0\) depends only on \(\Omega\) and the ellipticity bounds of
\(\vA\), and is independent of \(f\), \(\varepsilon\), and \(\delta\).
\end{prop}

\begin{proof}
First, we define \( u_\varepsilon = \int_{\Omega} G(\vx,\vy)\, (f * \psi_\varepsilon)(\vy)\, \D \vy \) based on \eqref{Fin}.  
Then, by Lemma~\ref{lem:green_exist_unique} and the fact that \( f * \psi_\varepsilon \in L^\infty(\Omega) \), it follows that \( u_\varepsilon \in H^1_0(\Omega) \).  
Furthermore, we estimate
\[
\|u_{\boldsymbol{\theta}}\|_{H^1(\Omega)} \le \int_\Omega \|G(\cdot,\vy)\|_{H^1(\Omega)}\, |f(\vy)|\, \D \vy 
\le \sup_{\vy \in \Omega} \|G(\cdot,\vy)\|_{H^1(\Omega)} \cdot \|f\|_{L^1(\Omega)} < \infty,
\]
and \( u_{\boldsymbol\theta}(\vx_*) = \int_{\Omega} G_{\boldsymbol\theta}(\vx_*,\vy)\, f(\vy)\, \D \vy = 0 \) for all \( \vx_* \in \partial\Omega \).

For fixed \(\vy \in \Omega\), since \(\vA \in W^{1,\infty}(\Omega;\mathbb{R}^{d \times d})\) and \(G_{\boldsymbol\theta}(\cdot,\vy) \in W^{2,\infty}(\Omega) \cap H^1_0(\Omega)\), it follows that
\[
q_{\vy}(\vx) := \nabla_{\!x} \cdot \bigl( \vA(\vx)\nabla_{\!x} G_{\boldsymbol\theta}(\vx,\vy) \bigr) \in L^\infty(\Omega).
\]
Since \(G_{\boldsymbol\theta}(\cdot,\vy)\) (respectively, \(G_\varepsilon(\cdot,\vy)\)) can be regarded as the solution of problem~\eqref{poisson} with source term \(q_{\vy}(\vx)\) (respectively, \(\mathcal{N}_\varepsilon(\cdot,\vy)\)) for any \(\vy \in \Omega\), it follows that the difference \(G_{\varepsilon}(\cdot,\vy) - G_{\boldsymbol\theta}(\cdot,\vy)\) is the solution of problem~\eqref{poisson} with source term \(\mathcal{N}_\varepsilon(\cdot,\vy) - q_{\vy}\).

Furthermore, set \(w = u_{\varepsilon} - u_{\boldsymbol\theta} = \int_\Omega \big(G_{\varepsilon}(\vx,\vy) - G_{\boldsymbol\theta}(\vx,\vy)\big) f(\vy)\, \D\vy\). Based on Minkowski’s integral inequality, we have
\begin{align}
  \|u_{\varepsilon} - u_{\boldsymbol\theta}\|_{H^1(\Omega)}
  &=
  \left\|\int_\Omega \big(G_{\varepsilon}(\cdot,\vy) - G_{\boldsymbol\theta}(\cdot,\vy)\big) f(\vy)\, \D\vy\right\|_{H^1(\Omega)} \notag\\
  &\le
  \int_\Omega \|G_{\varepsilon}(\cdot,\vy) - G_{\boldsymbol\theta}(\cdot,\vy)\|_{H^1(\Omega)} |f(\vy)|\, \D\vy \notag\\
  &\le
  \sup_{\vy \in \Omega} \|G_{\varepsilon}(\cdot,\vy) - G_{\boldsymbol\theta}(\cdot,\vy)\|_{H^1(\Omega)} \cdot \|f\|_{L^1(\Omega)} \notag\\
  &\le
  C \sup_{\vy\in\Omega}
      \| \mathcal{N}_{\varepsilon}(\cdot,\vy)
    -q_{\vy}\|_{L^{2}(\Omega)}\cdot
  \|f\|_{L^{1}(\Omega)} 
  \;\le\; C\delta \,
  \|f\|_{L^{1}(\Omega)},\notag
\end{align}
where the third inequality uses the \(H^1\)-regularity estimate for the weak solution (cf.~\eqref{eq:H1_error}) together with the assumption that \(G_{\boldsymbol\theta}\) satisfies the homogeneous Dirichlet boundary condition.
\end{proof}

\begin{rmk}\label{rmk:G_NN}
The assumptions \(G_{\boldsymbol\theta}(\cdot,\vy)\in
W^{2,\infty}(\Omega)\) and the homogeneous Dirichlet boundary condition
can be met in practice:  
one uses a smooth activation (e.g.\ \(\tanh\)) to guarantee
\(W^{2,\infty}\)-regularity, and enforces the boundary condition either by
a structure‐preserving network construction
\cite{gu2021structure,luo2020two} or by adding a large penalty term in the
loss.  The remaining task is therefore to obtain
\(G_{\boldsymbol\theta}\) so that the residual bound
\eqref{eq:small_residual} is sufficiently small. In this paper we accomplish that with a multiscale neural-network design.
\end{rmk}}

\section{Multiscale Neural Networks (MSNNs)}
\label{sec:mnn}

In this section, we provide details on the construction of the MSNN in \cref{nm}. In particular, we will analyze the selection criteria for activation functions $\sigma$ and $\sigma_\varepsilon$ in \cref{eq:activation} and prove how these choices contribute to the approximation properties of NNs. 

\subsection{Multiscale Barron space and activation functions}
We assume that  the following assumptions hold for the activation functions $\sigma$ and matrix $\vA(\vx)$:
\begin{assump}\label{assump2}
For arbitrary $\vv=(\vv_{\vx},\vv_{\vy})\in\sR^{2d}$ and $\vz_i=(\vx_i,\vy_i)\in[0,1]^{2d}$ for $i=1,2$, 
let $\gamma_i= \vv \cdot \vz_i + b$ and $\beta=(\|\vv\|_1+|b|)^2$.
There exists a constant $M\ge 0$ such that   $\sigma$ and $\vA(\vx)$  satisfy:
\begin{align}
    \abs{\sigma''\left(\gamma_1\right) \vv_{\vx}^\top \vA(\vx_1)\vv_{\vx}  - \sigma''\left(\gamma_2\right) \vv_{\vx}^\top \vA(\vx_2)\vv_{\vx}}
    &\leq \beta M|{\vv\cdot(\vz_1-\vz_2)}|,  \\
    \abs{\sigma'\left(\gamma_1\right) (\nabla\cdot\vA(\vx_1))^\top\vv_{\vx} - 
          \sigma'\left(\gamma_2\right) (\nabla\cdot\vA(\vx_2))^\top\vv_{\vx}} &\le 
           \beta M \abs{\vv\cdot (\vz_1-\vz_2)}.
\end{align}
   \end{assump}

   \begin{rmk}
     When \(\vA(\vx)\) is a constant matrix, the assumption can be directly obtained from the Lipschitz property of the activation function: \( |\sigma^{(k)}(x)| \leq \sqrt{M}|x|^{3-k} \) for \(k=2,3\), and the boundedness of \(\vA\), i.e., \( |\vv_{\vx}^\top \vA \vv_{\vx}| \leq \sqrt{M}\|\vv_{\vx}\|_1^2 \). This condition holds for most of the common activation functions, such as 
     \(\arctan(x)\) and \(\tanh(x)\) used in the experiments in \cref{num}. In the case of variable \(\vA(\vx)\), the additional requirement is that \(\vA(\vx)\) satisfies a special type of Lipschitz property, specifically in the \(\vv\)-direction when combined with \(\vv_{\vx}\) and \(\sigma\). However, this is not the standard definition of Lipschitz continuity, so we will not delve into it further here and will simply make an assumption at this point.

   \end{rmk}
%
Next, we describe the methodology for choosing the activation functions
in terms of the parameters \(\alpha\) and \(\beta\) of $\sigma_\varepsilon$ in \cref{eq:activation}. The choice of \(\alpha\) and \(\beta\) should effectively reflect the structure of the Green's function, i.e., thereby enabling NNs to approximate the Green's function accurately while minimizing the number of parameters. The optimal values of \(\alpha\) and \(\beta\) depend on each other. For simplification, we  set \(\beta = 1\) and focus on determining the appropriate \(\alpha\). This approach of choosing \(\alpha\) can be applied to other values of  \(\beta\). 
The selection of \(\alpha\) relies on the Barron space method \cite{barron1993universal,weinan2022some,lu2021priori,siegel2022sharp,siegel2023characterization}, which is a technique for approximating specific target functions using shallow NNs without suffering from \emph{the curse of dimensionality}. Given the multiscale nature of the proposed approach, we adapt this technique into a  \emph{multiscale Barron space method}. This adaptation is necessary to evaluate errors using \(L^\infty\)-error with second-order derivative information, particularly relevant for solving second-order PDEs. To the best of our knowledge, this is the first attempt to apply the Barron space method in such a context.

When \(\alpha^* \in\sR\) and \(\beta = 1\) in \cref{eq:activation}, the large-scale NN $\phi_1$ in \cref{nm} can be simplified as
\begin{align}
\phi_1(\vz;\vtheta_1)&=\frac{\varepsilon^{\alpha^*} }{n}\sum_{i=1}^{n}a_i\sigma\left(\vw_i\cdot{\vz}/{\varepsilon}+{b_i}/{\varepsilon}\right), \quad \vz=(\vx, \vy), \quad \vw_i=(\vw_{i,\vx},\vw_{i,\vy}), 
\label{eq:ph11}
\end{align}
applying the linear differential operator in \cref{approximatedd}, it follows that
\begin{align} \label{eq:diffphi0}
\nabla\cdot(\vA(\vx) \nabla \phi_1) &= \frac{\varepsilon^{-2+\alpha^*}}{n} \sum_{i=1}^{n}\zeta(\vz; a_i, \vw_i, b_i),
\end{align}
where
\begin{align} \label{eq:diffphi1}
\zeta(\vz; a_i, \vw_i, b_i) &= a_i \left[ \vw_{i,\vx}^\top \vA(\vx)\vw_{i,\vx}\sigma''\left(f_i\right) +  
{\varepsilon}(\nabla\cdot\vA(\vx))^\top\vw_{i,\vx}\sigma'\left(f_i\right) \right],
\end{align}
where $f_i = (\vw_i\cdot{\vz} + {b_i})/{\varepsilon}$.
This leads to the following definition of the so-called $\varepsilon,\alpha^*$-$\vA(\vx)$-Barron function, which  reduces to the classical Barron space as discussed in \cite{weinan2022some,luo2020two} when \(\varepsilon = 1\).


\begin{defi}\label{barronde}[$\varepsilon,\alpha^*$-$\vA(\vx)$-Barron function]
For any $\varepsilon\in(0,1]$ and $\alpha^*\in \sR$, function $f(\vx,\vy):\Omega\times\Omega\to \sR$ is called an $\varepsilon,\alpha^*$-$\vA(\vx)$-Barron function with respect to $\sigma(x)$, if 
there exists a probability distribution $\rho$ over $\sR^{d+2}$ such that 
\begin{align}
f(\vx,\vy) = \expt_{(a,\vw,b)\sim\rho} \varepsilon^{-2+\alpha^*} \zeta(\vz; a, \vw, b).
\label{barronpre}
\end{align}
\end{defi}
For $\varepsilon,\alpha^*$-$\vA(\vx)$-Barron functions, we can define the Barron-norm constant.
\begin{defi}\label{constant}
Suppose that the function $f(\vx,\vy):\Omega\times\Omega\to \sR$ is an $\varepsilon,\alpha^*$-$\vA(\vx)$-Barron function with respect to $\sigma(x)$. The Barron-norm constant of $f$ is defined as
\begin{equation}
    \|f\|_{\fB_{\varepsilon,\alpha^*}} := \inf_{\rho \in \mathcal{P}} \sqrt{\expt_{(a, \vw, b) \sim \rho} (|a| \left(\|\vw\|_1^3 + 2\|\vw\|_1^2 |b| + \|\vw\|_1 |b|^2\right) )^2}\label{barronconstant}
\end{equation}
where $\mathcal{P}$ is the set of all probability distributions $\rho$ over $\mathbb{R}^{d+2}$ that satisfy  \cref{barronpre}.
\end{defi}
This definition is consistent with that in \cite{ma2020towards,ma2022barron} when \(p=2\) in their context and  
match the definition in \cite{luo2020two} when \((\vw,b)\) is replaced by \(\bar{\vw}\). In \cite{ma2020towards,ma2022barron}, the authors use the definition of the Barron norm for \(p=\infty\) because the space is sufficiently large when the activation functions are ReLU due to the homogeneity of ReLU, which ensures that the spaces for \(1 \le p \le \infty\) are equivalent. However, since our work does not restrict to ReLU activation function, the proof differs, and we perform it specially for the \(p=2\) case. 

The difference between our definition of Barron-type space and others is that we introduce a smaller term \(\varepsilon\) in the definition, which we call the \emph{multiscale Barron space method}. The following definitions are also based on the multiscale Barron space method. We first establish a large-scale NN to approximate the Green's function, in \cref{bound}, based on the large-scale Barron space, i.e., \(\varepsilon,\alpha^*\)-\(\vA(\vx)\)-Barron function for \(\varepsilon \ll 1\). Then, we establish a small-scale NN based on the small-scale Barron space, i.e., \(\varepsilon=1\), in \cref{barronde}, to approximate the residual from the first step.

\begin{lem}\label{twobarron}
    For $\alpha^*_1,\alpha^*_2\in\sR$ and $\varepsilon\in(0,1]$, if function $f$ is an $\varepsilon,\alpha^*_1$-$\vA(\vx)$-Barron function, then it is also an $\varepsilon,\alpha^*_2$-$\vA(\vx)$-Barron function. Furthermore, we have 
\begin{align} \label{eq:ff}
    \frac{\|f\|_{\mathcal{B}_{\varepsilon,\alpha^*_1}}}{\|f\|_{\mathcal{B}_{\varepsilon,\alpha^*_2}}} =\varepsilon^{-\alpha^*_1+\alpha^*_2}.
\end{align}
\end{lem}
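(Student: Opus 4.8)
The plan is to exploit the fact that the integrand $\zeta(\vz;a,\vw,b)$ appearing in the representation \cref{barronpre} is homogeneous of degree one in the coefficient $a$, while the weight $(\vw,b)$ and the fixed scale $\varepsilon$ enter only through the bracketed factor. Concretely, since $\zeta(\vz;a,\vw,b)=a\big[\vw_{\vx}^\top\vA(\vx)\vw_{\vx}\sigma''(f)+\varepsilon(\nabla\cdot\vA(\vx))^\top\vw_{\vx}\sigma'(f)\big]$ with $f=(\vw\cdot\vz+b)/\varepsilon$ is linear in $a$, we have the identity $\varepsilon^{-2+\alpha^*_1}\zeta(\vz;a,\vw,b)=\varepsilon^{-2+\alpha^*_2}\zeta(\vz;\varepsilon^{\alpha^*_1-\alpha^*_2}a,\vw,b)$. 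This tells us that passing from level $\alpha^*_1$ to level $\alpha^*_2$ amounts to reparametrizing the representing measure in the $a$-variable alone, leaving $(\vw,b)$ untouched.

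First I would introduce the scaling map $T:\sR^{d+2}\to\sR^{d+2}$ defined by $T(a,\vw,b)=(\varepsilon^{\alpha^*_1-\alpha^*_2}a,\vw,b)$ together with its pushforward $T_\sharp$ on probability measures. Since $\varepsilon\in(0,1]$, the factor $\varepsilon^{\alpha^*_1-\alpha^*_2}$ is a finite, strictly positive constant, so $T$ is a bijection with inverse $T^{-1}(a,\vw,b)=(\varepsilon^{\alpha^*_2-\alpha^*_1}a,\vw,b)$. Given a representing distribution $\rho_1\in\mathcal{P}$ for $f$ at level $\alpha^*_1$, I would set $\rho_2:=T_\sharp\rho_1$ and verify, via the change-of-variables formula for pushforwards together with the linearity of $\zeta$ in $a$, that
\[
\expt_{(a,\vw,b)\sim\rho_2}\varepsilon^{-2+\alpha^*_2}\zeta(\vz;a,\vw,b)
=\expt_{(a,\vw,b)\sim\rho_1}\varepsilon^{-2+\alpha^*_1}\zeta(\vz;a,\vw,b)=f(\vx,\vy).
\]
This shows $\rho_2$ represents $f$ at level $\alpha^*_2$, proving the first assertion, and the bijectivity of $T$ gives a one-to-one correspondence between the representation classes $\mathcal{P}_{\alpha^*_1}$ and $\mathcal{P}_{\alpha^*_2}$.

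Next, to obtain the norm identity, I would track how the Barron integrand in \cref{barronconstant} transforms under $T$. Writing $g(\vw,b):=\|\vw\|_1^3+2\|\vw\|_1^2|b|+\|\vw\|_1|b|^2$, the quantity being averaged is $(|a|\,g(\vw,b))^2$, which depends on $a$ only through $|a|^2$; hence for $\rho_2=T_\sharp\rho_1$,
\[
\expt_{(a,\vw,b)\sim\rho_2}(|a|\,g(\vw,b))^2
=\varepsilon^{2(\alpha^*_1-\alpha^*_2)}\,\expt_{(a,\vw,b)\sim\rho_1}(|a|\,g(\vw,b))^2.
\]
Because $T$ is a bijection between $\mathcal{P}_{\alpha^*_1}$ and $\mathcal{P}_{\alpha^*_2}$ and the scalar factor $\varepsilon^{2(\alpha^*_1-\alpha^*_2)}$ is independent of the chosen measure, taking the infimum over the respective classes commutes with this factor, yielding $\|f\|_{\mathcal{B}_{\varepsilon,\alpha^*_2}}^2=\varepsilon^{2(\alpha^*_1-\alpha^*_2)}\|f\|_{\mathcal{B}_{\varepsilon,\alpha^*_1}}^2$. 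Taking square roots and rearranging then gives \cref{eq:ff}.

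I expect no serious obstacle, since the whole argument collapses to the degree-one homogeneity in $a$. The only points requiring genuine care are formal: confirming the pushforward identity so that $\rho_2$ truly represents $f$, and arguing that the infimum over $\mathcal{P}_{\alpha^*_2}$ equals $\varepsilon^{2(\alpha^*_1-\alpha^*_2)}$ times the infimum over $\mathcal{P}_{\alpha^*_1}$, which relies on the invertibility of $T$ in both directions, guaranteed by $\varepsilon>0$. One should also flag the degenerate case where a Barron norm vanishes: the squared-norm scaling relation shows both norms then vanish simultaneously, and \cref{eq:ff} is read as that scaling relation rather than a literal quotient.
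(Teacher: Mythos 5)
Your proposal is correct and follows essentially the same route as the paper's proof: rescale the coefficient $a$ by $\varepsilon^{\alpha^*_1-\alpha^*_2}$ (the paper's $\widehat{a}$), observe that the representing measure transforms by the corresponding rescaling, and note that the norm-defining expectations scale by $\varepsilon^{2(\alpha^*_1-\alpha^*_2)}$ so the infima inherit the same factor. Your version is simply a more careful writing of the same argument — formalizing the paper's ``rescaling of $\rho_1$'' as a pushforward $T_\sharp\rho_1$, justifying the infimum step via the bijectivity of $T$, and flagging the degenerate case of vanishing norms, none of which changes the underlying idea.
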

\begin{proof}
  For \(f\) to be an \(\varepsilon, \alpha^*_1\)-\(\vA(\vx)\)-Barron function, it means that we can find a probability distribution \(\rho_1\) such that 
\begin{align}
f(\vx,\vy) = \expt_{(a,\vw,b)\sim\rho_1} \varepsilon^{-2+\alpha^*_1} a \left[ \vw_{\vx}^\top \vA(\vx)\vw_{\vx}\sigma''\left(d\right) + \varepsilon(\nabla\cdot\vA(\vx))^\top\vw_{\vx}\sigma'\left(d\right) \right],
\end{align}
where $d = {(\vw\cdot{\vz} + {b})}/{\varepsilon}$. With simple calculation, we have
\begin{align}
f(\vx,\vy) = \expt_{(a,\vw,b)\sim\rho_1} \varepsilon^{-2+\alpha^*_2}\varepsilon^{\alpha^*_1-\alpha^*_2} a \left[ \vw_{\vx}^\top \vA(\vx)\vw_{\vx}\sigma''\left(d\right) + \varepsilon(\nabla\cdot\vA(\vx))^\top\vw_{\vx}\sigma'\left(d\right) \right].
\end{align}
Then set \(\widehat{a} = \varepsilon^{\alpha^*_1-\alpha^*_2} a\),  we can show that there exists a distribution \(\rho_2\) such that 
\begin{align}
f(\vx,\vy) = \expt_{(\widehat{a},\vw,b)\sim\rho_2} \varepsilon^{-2+\alpha^*_2} \widehat{a} \left[ \vw_{\vx}^\top \vA(\vx){\vw_{\vx}} \sigma''\left(d\right) + \varepsilon(\nabla\cdot\vA(\vx))^\top{\vw_{\vx}}\sigma'\left(d\right) \right],
\end{align}
where \(\rho_2\) is just a rescaling of \(\rho_1\). 
Furthermore, we have 
\begin{align}
\expt_{(a, \vw, b) \sim \rho_1} (\eta|a|)^2 = \varepsilon^{-2(\alpha^*_1-\alpha^*_2)}\expt_{(\widehat{a},{\vw},{b})\sim\rho_2} (\eta|\widehat{a}|)^2,
\end{align}
where $\eta=\|\vw\|_1^3 + 2\|\vw\|_1^2 |b| + \|\vw\|_1 |b|^2$.
The above still holds for the \(\inf_{\rho \in \mathcal{P}}\) calculation since they are proportional, therefore 
\cref{eq:ff} follows.
\end{proof}

\begin{rmk}
    \cref{twobarron} tells us two facts: First, in a {multiscale Barron space method}, for a fixed $\varepsilon$, whether a function is an 
    \(\varepsilon,\alpha^*\)-\(\vA(\vx)\)-Barron function or not does not depend on \(\alpha^*\), although 
    the magnitude of the Barron-norm constant does. This shows that allowing a function to belong to an \(\varepsilon,\alpha^*\)-\(\vA(\vx)\)-Barron function for all \(\alpha^*\) for a fixed $\varepsilon$ is not an overly restrictive assumption. In the following sections, we will perform the theoretical analysis based on the assumption that for a fixed $\varepsilon$, \(\mathcal{N}_\varepsilon(\vx,\vy)\) is an \(\varepsilon,\alpha^*\)-\(\vA(\vx)\)-Barron function for all \(\alpha^{*}\). Given the high smoothness of \(\mathcal{N}_\varepsilon(\vx,\vy)\), this assumption should hold in most cases based on \cite[Theorem 3.1]{wojtowytsch2022representation}. If not, we can still establish the approximation theory using Sobolev approximation \cite{siegel2022optimal,yang2023nearly}, which, despite the curse of dimensionality, retains the advantage of the MSNN. Second, for a fixed 
    \(\varepsilon \in (0,1)\), a larger \(\alpha^*\) results in a larger Barron-norm constant. 
    When \(\varepsilon = 1\), the constants are the same, thus we denote 
    by $\|f\|_{\mathcal{B}_{1}}$ the Barron-norm constant and refer to $1,\alpha^*$-$\vA(\vx)$-Barron functions as $1$-$\vA(\vx)$-Barron functions  for simplicity.
\end{rmk}


\subsection{Approximation property of MSNN}
Under \cref{assump2}, we can show the approximation rate of the proposed MSNN in the next two theorems.
The difficulties in our approximation arise because we are approximating the Green's function based on the PDE~\cref{approximatedd}, which requires the approximation to contain derivative information for both the NNs and the Green's function. 
 Classical \(L^2\) or \(L^\infty\)-approximations, which do not contain derivative information, such as those in \cite{ma2020towards}, are not sufficient. Furthermore, when approximating Green's functions, the most challenging part is the region near \(\vx = \vy\). If we consider \(H^2\) approximation, we cannot ensure that this region is approximated well when the overall error is small, because the measure of that region is small compared to the whole domain. Therefore, we need to consider the \(L^\infty\)-error with second-order derivative information in the approximation. { Finally, we obtain Barron-space approximation bounds for activation functions that do not share the homogeneity property of \(\mathrm{ReLU}^{k}\); because such homogeneity would destroy the multiscale structure, our proof differs from the methods in \cite{luo2020two,gu2023stationary}.
 } 

 \begin{thm}\label{bound}
       For any $\varepsilon\in(0,1]$, suppose  
       $\mathcal{N}_\varepsilon(\vx,\vy):\Omega\times\Omega\to \mathbb{R}$ is an $\varepsilon,\alpha^*$-$\vA(\vx)$-Barron function with respect to $\sigma(x)$ for all $\alpha^*\in \sR$, and \cref{assump2} holds
       and $\vz = (\vx, \vy)$.
       Then for any $n \ge 1$, there exists a set of large-scale NNs, denoted by $\mathcal{A}_n$, with activation function $\sigma_\varepsilon$ in \cref{eq:activation}, a finite $\alpha^*$ associated with $\beta=1$, i.e.,
\begin{equation}
\mathcal{A}_n \subset \left\{\phi_1 \mid \phi_1(\vz; \vtheta) =  \frac{1}{n}\sum_{i=1}^n a_i \sigma_\varepsilon\left( \vw_i \cdot \vz + b_i\right)
\right\}
\end{equation}
such that for any $\phi_1(\vz; \vtheta) \in \mathcal{A}_n$, we have
\begin{equation}
\sup_{\vx,\vy} \abs{\nabla \cdot \left( \vA(\vx) \nabla \phi_1(\vz; \vtheta) \right) - \mathcal{N}_\varepsilon(\vx,\vy)} 
\le \frac{8M\sqrt{2d}\|\fN_\varepsilon\|_{\fB_{\varepsilon,\alpha^*}}}{\varepsilon^{-\alpha^*+3}\sqrt{n}} 
\label{first_part}
\end{equation}
and 
\begin{equation}
\frac{1}{n} \sum_{i=1}^n \left(|a_i| \left( \|\vw_i\|_1^3 + 2\|\vw_i\|_1^2 |b_i| + \|\vw_i\|_1 |b_i|^2\right) \right)^2\le 3\|\fN_\varepsilon\|^2_{\fB_{\varepsilon,\alpha^*}}\label{order1}
\end{equation}
where $M$ is the  constant defined in \cref{assump2}. 
\end{thm}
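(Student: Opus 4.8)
The plan is to realize the target $\mathcal{N}_\varepsilon$ as a population mean and to approximate it by an empirical mean over $n$ i.i.d.\ samples, controlling the uniform error through Rademacher complexity. Concretely, write $g(\vz;a,\vw,b) := \varepsilon^{-2+\alpha^*}\zeta(\vz;a,\vw,b)$ with $\zeta$ as in \cref{eq:diffphi1}, so that \cref{barronpre} reads $\mathcal{N}_\varepsilon(\vz) = \expt_{(a,\vw,b)\sim\rho}\, g(\vz;a,\vw,b)$ for a distribution $\rho$ that (up to arbitrarily small slack) attains the infimum in \cref{barronconstant}. Drawing $\omega_i=(a_i,\vw_i,b_i)\sim\rho$ i.i.d.\ and forming $\phi_1$ as in \cref{eq:ph11} gives, by \cref{eq:diffphi0}, $\nabla\cdot(\vA(\vx)\nabla\phi_1)=\frac1n\sum_{i=1}^n g(\vz;\omega_i)$, so that the left-hand side of \cref{first_part} is exactly the uniform deviation $\sup_{\vz}\bigl|\frac1n\sum_i g(\vz;\omega_i) - \expt_\omega g(\vz;\omega)\bigr|$ of an empirical mean from its expectation.

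First I would symmetrize. Viewing the sampled $\omega_i$ as the data and $\mathcal{F}=\{\,\omega\mapsto g(\vz;\omega)\mid \vz\in\Omega\times\Omega\,\}$ as the class indexed by $\vz$, \cref{connect1} applied to $\mathcal{F}$ and to $-\mathcal{F}$ (whose empirical Rademacher complexities coincide) bounds the expected two-sided uniform deviation by $4\,\expt_S\tR_S(\mathcal{F})$. It therefore suffices to bound $\tR_S(\mathcal{F}) = \frac1n\expt_{\Xi_n}\sup_{\vz}\sum_i\xi_i g(\vz;\omega_i)$.

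The main step, and the main obstacle, is to bound this complexity with the correct dimensional factor $\sqrt{2d}$ and the Barron constant. The difficulty is that, because $\vA(\vx)$ varies in $\vx$, the feature $\vz\mapsto g(\vz;\omega_i)$ is not a pure ridge function of $\vw_i\cdot\vz$, and this is precisely what \cref{assump2} is designed to repair. Applying \cref{assump2} with the identification $\vv=\vw_i/\varepsilon$ and bias $b_i/\varepsilon$ (so that $\gamma=f_i$) to each of the two summands of $\zeta$ in \cref{eq:diffphi1}, one finds that $\vz\mapsto g(\vz;\omega_i)$ is Lipschitz, $|g(\vz;\omega_i)-g(\vz';\omega_i)|\le L_i\,\|\vz-\vz'\|_2$, with $L_i = 2M\varepsilon^{\alpha^*-3}|a_i|\,\eta_i$ and $\eta_i=\|\vw_i\|_1^3+2\|\vw_i\|_1^2|b_i|+\|\vw_i\|_1|b_i|^2$, and moreover that this variation is governed by the scalar projection $\vw_i\cdot\vz$. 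Subtracting the value at a base point $\vz_0\in[0,1]^{2d}$ (which does not change the Rademacher average) and passing to the linearized ridge bound by a contraction argument, I would use $\sup_{\vz\in[0,1]^{2d}}\langle\,\cdot\,,\vz-\vz_0\rangle = \|\cdot\|_1\le\sqrt{2d}\,\|\cdot\|_2$ together with orthogonality of the Rademacher signs to obtain $\tR_S(\mathcal{F})\le \frac{\sqrt{2d}}{n}\sqrt{\sum_i L_i^2}$. Taking $\expt_S$ and applying Jensen's inequality with $\expt_\rho (|a|\eta)^2=\|\mathcal{N}_\varepsilon\|_{\fB_{\varepsilon,\alpha^*}}^2$ then produces a bound on the expected uniform deviation of the form $C\,M\sqrt{2d}\,\varepsilon^{\alpha^*-3}\|\mathcal{N}_\varepsilon\|_{\fB_{\varepsilon,\alpha^*}}/\sqrt{n}$, which is the content of \cref{first_part}.

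Finally I would derive \cref{order1} and assemble the existence claim by a second-moment and union-bound argument. Since $\expt_S\bigl[\frac1n\sum_i(|a_i|\eta_i)^2\bigr]=\expt_\rho (|a|\eta)^2=\|\mathcal{N}_\varepsilon\|^2_{\fB_{\varepsilon,\alpha^*}}$, Markov's inequality bounds the probability that \cref{order1} fails, while Steps~2--3 bound the expectation of the uniform deviation and hence, again by Markov, the probability that \cref{first_part} fails. Choosing the two Markov thresholds so that these failure probabilities sum to strictly less than one, I conclude that some realization of $\{\omega_i\}$ satisfies both inequalities simultaneously; $\mathcal{A}_n$ is then defined as the (nonempty) set of all parameter configurations $\vtheta$ obeying \cref{first_part} and \cref{order1}, which makes the quantified statement trivially true on $\mathcal{A}_n$. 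I expect Step~3 — reconciling the spatially varying $\vA(\vx)$ with the clean $\sqrt{2d}/\sqrt{n}$ scaling through \cref{assump2} — to be the crux of the argument.
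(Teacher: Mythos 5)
Your proposal follows essentially the same route as the paper's proof: realize $\mathcal{N}_\varepsilon$ as a population mean of $\varepsilon^{-2+\alpha^*}\zeta$, sample i.i.d.\ from a near-optimal $\rho$, symmetrize via \cref{connect1}, use \cref{assump2} as a contraction device to reduce the Rademacher average to a linear one (yielding the $\sqrt{2d}$ factor via the $\ell_1$--$\ell_2$ comparison, Jensen, and orthogonality of the signs), and finish with Markov plus a union bound to extract a realization satisfying \cref{first_part} and \cref{order1} simultaneously. The only differences are cosmetic (you package the two terms of $\zeta$ into a single Lipschitz constant $L_i$ and leave the explicit constants $8$ and $3$ to the threshold choice, where the paper carries $(1+\delta)<9/8$ and peels the contraction one Rademacher sign at a time), so the argument is correct and matches the paper's.
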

\begin{proof}
By the definitions of $\varepsilon,\alpha^*$-$\vA(\vx)$-Barron function, there exists a probability density \(\rho\) such that 
\begin{align}
\fN_\varepsilon(\vx,\vy) &= \expt_{(a,\vw,b)\sim\rho} {\varepsilon^{-2+\alpha^*}} \zeta(\vz; a, \vw, b),
\label{barronpre1}
\end{align}
for all \(\vz=(\vx,\vy) \in \Omega \times \Omega\), 
where $\zeta$ is defined in \cref{eq:diffphi1},
and  




\begin{equation} \label{eq:less1}
\sqrt{\expt_{(a, \vw, b) \sim \rho} \left[|a| \left(\|\vw\|_1^3 + 2\|\vw\|_1^2 |b| + \|\vw\|_1 |b|^2\right)\right]^2} \le (1+\delta)\|\fN_\varepsilon\|_{\fB_{\varepsilon,\alpha^*}}<\frac{9}{8}\|\fN_\varepsilon\|_{\fB_{\varepsilon,\alpha^*}},
\end{equation}where $\delta$ is a small constant larger than $0$.
We need to prove there exists $\{\zeta_i\}_{i=1}^n$ such that
\begin{equation}
\sup_{\vz} \left|\frac{1}{n} \sum_{i=1}^{n} \left({\varepsilon^{-2+\alpha^*}}\zeta_i - \fN_\varepsilon\right)\right| \le \frac{8 M\sqrt{2d}\|\fN_\varepsilon\|_{\fB_{\varepsilon,\alpha^*}}}{\varepsilon^{-\alpha^*+3}\sqrt{n}},
\quad \zeta_i \equiv \zeta(\vz; a_i, \vw_i, b_i).
\end{equation} The idea of the proof is to choose $\{\zeta_i\}_{i=1}^n$ 
with i.i.d. probability distribution $(a_i,\vw_i,$ $b_i)_{i=1}^n \sim \rho^n\) for \(i = 1, \ldots, n$,
then bound the expectation of \(\frac{1}{n} \sum_{i=1}^{n} \left({\varepsilon^{-2+\alpha^*}}\zeta_i - \mathcal{N}_\varepsilon\right)\). 
This shows that a sample exists that satisfies the error bound established earlier.


From \cref{connect1}, we have 
\begin{equation}\label{rad}
\begin{aligned}
&\expt_{(a_i,\vw_i,b_i)_{i=1}^n\sim \rho^n}\left [\sup_{\vz}\frac{1}{n}\sum_{i=1}^{n}\left({\varepsilon^{-2+\alpha^*}}\zeta_i - \fN_\varepsilon\right)\right] \\
&\le \frac{2}{\varepsilon^{2-\alpha^*}}\expt_{(a_i,\vw_i,b_i)_{i=1}^n\sim\rho^n}\expt_{\Xi_n}\left[\sup_{\vz}\frac{1}{n}\sum_{i=1}^n\xi_i \zeta_i\right]
\end{aligned}
\end{equation}
where $\Xi_n=\{\xi_1,\xi_2,\ldots,\xi_n\}$ are independent random samples drawn from the Radema- cher distribution, i.e., 
$\rmP(\xi_i=+1)=\rmP(\xi_i=-1)={1}/{2}$, for $i=1,2,\ldots,n$. The inequality \cref{rad} is by the definition of the Rademacher complexities in \cref{defrad}.

We next estimate the last term of \cref{rad}, i.e.,
\begin{align}
\expt_{\Xi_n}\left[\sup_{\vz}\frac{1}{n}\sum \xi_i \zeta_i\right] \le ~ &\expt_{\Xi_n}\left[\sup_{\vz}\frac{1}{n}\sum a_i \xi_i\vw_{i,\vx}^\top \vA(\vx)\vw_{i,\vx}  
\sigma''\left(f_i(\vz)\right)\right]
+ \\
&\expt_{\Xi_n}\left[\sup_{\vz}\frac{1}{n}\sum a_i \xi_i \varepsilon (\nabla\cdot\vA(\vx))^\top \vw_i \sigma'\left(f_i(\vz)\right)\right],
\end{align}
where $f_i(\vz) = (\vw_i\cdot {\vz}+{b_i})/{\varepsilon}$.
%
%
For the first term of the right-hand side of the above inequality, we have \begin{align}
   & \expt_{\Xi_n}\left[\sup_{\vz}\frac{1}{n}\sum \xi_i a_i \vw_{i,\vx}^\top \vA(\vx)\vw_{i,\vx} \sigma''\left(f_i(\vz)\right)\right] \notag\\=~&\frac{1}{2n}\expt_{\xi_2,\ldots,\xi_m}\Bigg[\sup_{\vz}\Bigg(\ a_1 \vw_{1,\vx}^\top \vA(\vx)\vw_{1,\vx} \sigma''\left(f_1(\vz)\right)\notag\\
   &+\sum_{i=2}^m\xi_i a_i \vw_{i,\vx}^\top \vA(\vx)\vw_{i,\vx} \sigma''\left(f_i(\vz)\right)\Bigg)\notag\\
   &+\sup_{\vz}\left(- a_1 \vw_{1,\vx}^\top \vA(\vx)\vw_{1,\vx} \sigma''\left(f_1(\vz)\right)+\sum_{i=2}^m\xi_i a_i \vw_{i,\vx}^\top \vA(\vx)\vw_{i,\vx} \sigma''\left(f_i(\vz)\right)\right)\Bigg]\notag\\
   =~&\frac{1}{2n}\expt_{\xi_2,\ldots,\xi_m}\Bigg[\sup_{\vz_1,\vz_2}\Bigg(\ a_1 \vw_{1,\vx}^\top \vA(\vx_1)\vw_{1,\vx} \sigma''\left(f_1(\vz_1)\right)- a_1 \vw_{1,\vx}^\top \vA(\vx_2)\vw_{1,\vx} \sigma''\left(f_1(\vz_2)\right)\notag\\&+\sum_{i=2}^m\xi_i a_i \vw_{i,\vx}^\top \vA(\vx_1)\vw_{i,\vx} \sigma''\left(f_i(\vz_1)\right)+\sum_{i=2}^m\xi_i a_i \vw_{i,\vx}^\top \vA(\vx_2)\vw_{i,\vx} \sigma''\left(f_i(\vz_2)\right)\Bigg)\Bigg].\notag
\end{align}
By \cref{assump2} we have inequality 
\begin{align} \label{eq:ineq}
    & a_1 \vw_{1,\vx}^\top \vA(\vx_1)\vw_{1,\vx} \sigma''\left(f_1(\vz_1)\right) - a_1 \vw_{1,\vx}^\top \vA(\vx_2)\vw_{1,\vx} \sigma''\left(f_1(\vz_2)\right) \notag\\
    =~& a_1 \varepsilon^2\frac{\vw_{1,\vx}^\top}{\varepsilon} \vA(\vx_1)\frac{\vw_{1,\vx} }{\varepsilon}\sigma''\left((\vw_i\cdot {\vz_1}+{b_i})/{\varepsilon}\right)\notag
    \\ &- a_1 \varepsilon^2\frac{\vw_{1,\vx}^\top }{\varepsilon}\vA(\vx_2)\frac{\vw_{1,\vx}}{\varepsilon} \sigma''\left((\vw_i\cdot {\vz_1}+{b_i})/{\varepsilon}\right)\notag \\
    \le~&M|a_1|(\|\vw_1\|_1+|b_1|)^2\left(\frac{1}{\varepsilon}\left|\vw_{1}\cdot(\vz_1-\vz_2)\right|\right).\notag
\end{align}
Substituting this back into the previous equation, we obtain 
\begin{align}
   & \mathbb{E}_{\Xi_n}\left[\sup_{\vz}\frac{1}{n}\sum \xi_i a_i \vw_{i,\vx}^\top \vA(\vx)\vw_{i,\vx} \sigma''\left(f_i\right)\right] \notag \\
   \leq ~& \frac{1}{2n}\mathbb{E}_{\xi_2,\ldots,\xi_m}\Bigg[\sup_{\vz_1,\vz_2}\Bigg(\frac{M}{\varepsilon}|a_1|(\|\vw_1\|_1+|b_1|)^2\left|\vw_{1}\cdot(\vz_1-\vz_2)\right| \notag \\
   &+ \sum_{i=2}^m\xi_i a_i \vw_{i,\vx}^\top \vA(\vx_1)\vw_{i,\vx} \sigma''\left(f_i(\vz_1)\right) + \sum_{i=2}^m\xi_i a_i \vw_{i,\vx}^\top \vA(\vx_2)\vw_{i,\vx} \sigma''\left(f_i(\vz_2)\right)\Bigg)\Bigg] \notag \\
   = ~& \frac{1}{2n}\mathbb{E}_{\xi_2,\ldots,\xi_m}\Bigg[\sup_{\vz_1,\vz_2}\Bigg(\frac{M}{\varepsilon}|a_1|(\|\vw_1\|_1+|b_1|)^2\left(\vw_{1}\cdot\vz_1-\vw_{1}\cdot\vz_2\right) \notag \\
   &+ \sum_{i=2}^m\xi_i a_i \vw_{i,\vx}^\top \vA(\vx_1)\vw_{i,\vx} \sigma''\left(f_i(\vz_1)\right) + \sum_{i=2}^m\xi_i a_i \vw_{i,\vx}^\top \vA(\vx_2)\vw_{i,\vx} \sigma''\left(f_i(\vz_2)\right)\Bigg)\Bigg] \notag\\
   = ~& \mathbb{E}_{\Xi_n}\Bigg[\sup_{\vz}\frac{1}{n}\Bigg(\varepsilon^{-1}|a_1|M(\|\vw_1\|_1+|b_1|)^2\xi_1\vw_{1}\cdot\vz \notag \\
   &+ \sum_{i=2}^m \xi_i a_i \vw_{i,\vx}^\top \vA(\vx)\vw_{i,\vx} \sigma''\left(f_i(\vz)\right)\Bigg)\Bigg], \notag
\end{align}
Repeating this process \(n\) times, we have 
\begin{align}
      &\mathbb{E}_{\Xi_n}\left[\sup_{\vz}\frac{1}{n}\sum \xi_i a_i \vw_{i,\vx}^\top \vA(\vx)\vw_{i,\vx} \sigma''\left(f_i\right)\right] \\ \le~&\mathbb{E}_{\Xi_n}\left[\sup_{\vz}\frac{1}{n}\sum_{i=1}^m\varepsilon^{-1}|a_i|M(\|\vw_i\|_1+|b_i|)^2\xi_i\vw_{i}\cdot\vz\right]\\ \leq~&\varepsilon^{-1}M\mathbb{E}_{\Xi_n}\left\|\frac{1}{n}\sum_{i=1}^m\xi_i|a_i|(\|\vw_i\|_1+|b_i|)^2\vw_{i}\right\|_1. \label{eq:term1}
\end{align} Similarly, we have \begin{align}
    \expt_{\Xi_n}\left[\sup_{\vz}\frac{1}{n}\sum a_i \xi_i \varepsilon (\nabla\cdot\vA(\vx))^\top \vw_i \sigma'\left(f_i\right)\right]\le \frac{M}{\varepsilon}\mathbb{E}_{\Xi_n}\left\|\frac{1}{n}\sum_{i=1}^m\xi_i|a_i|(\|\vw_i\|_1+|b_i|)^2\vw_{i}\right\|_1\label{eq:term2}
\end{align} 
It follows from \cref{eq:less1} that
the expectations of the following term in the summations of
\cref{eq:term1} and \cref{eq:term2} are less than $(1+\delta)\|\fN_\varepsilon\|_{\fB_{\varepsilon,\alpha^*}}$, i.e.,
\begin{align}
 \sqrt{\expt_{(a_i,\vw_i,b_i)\sim \rho}
  \norm{\vq_i}_1^2} \le (1+\delta)\|\fN_\varepsilon\|_{\fB_{\varepsilon,\alpha^*}}, \quad
 \vq_i \equiv
 |a_i|(\|\vw_i\|_1+|b_i|)^2\vw_{i}.
\end{align}Putting together, we have 
\begin{align}
&\expt_{(a_i,\vw_i,b_i)_{i=1}^n\sim \rho^n}\left [\sup_{\vz }\frac{1}{n}\sum\left({\varepsilon^{-2-\alpha^*}} \zeta_i - \fN_\varepsilon\right)\right] \\
\le~ & \expt_{(a_i,\vw_i,b_i)_{i=1}^n\sim \rho^n}\frac{4 M}{\varepsilon^{3-\alpha^*}}\expt_{\Xi_n}\left\|\frac{1}{n}\sum\xi_i\vq_i\right\|_1\\
\le~ & \frac{4 M\sqrt{2d}}{\varepsilon^{3-\alpha^*}}\expt_{(a_i,\vw_i,b_i)_{i=1}^n\sim \rho^n}\expt_{\Xi_n}\left\|\frac{1}{n}\sum\xi_i\vq_i\right\|_2 \\
\le~ &\frac{4 M\sqrt{2d}}{n\varepsilon^{3-\alpha^*}}\sqrt{\expt_{(a_i,\vw_i,b_i)_{i=1}^n\sim \rho^n}\expt_{\Xi_n}\left\|\sum\xi_i\vq_i\right\|_2^2},\end{align}where  
$\|\boldsymbol{z}\|_2\le\|\boldsymbol{z}\|_1 \leq \sqrt{2d}\|\boldsymbol{z}\|_2, \forall \boldsymbol{z} \in \mathbb{R}^{2d}$ is used,
and the last step is due to Jensen’s inequality. By the direct calculation, we know the cross-terms will disappear under the expectation, therefore, we have \begin{align}
&\expt_{(a_i,\vw_i,b_i)_{i=1}^n\sim \rho^n}\left [\sup_{\vz }\frac{1}{n}\sum\left({\varepsilon^{-2-\alpha^*}} \zeta_i - \fN_\varepsilon\right)\right] \\
\le~ &\frac{4 M\sqrt{2d}}{n\varepsilon^{3-\alpha^*}}\sqrt{\expt_{(a_i,\vw_i,b_i)_{i=1}^n\sim \rho^n}\sum\left\|\vq_i\right\|_2^2}\\
\le~ &\frac{4 M\sqrt{2d}}{n\varepsilon^{3-\alpha^*}}\sqrt{\expt_{(a_i,\vw_i,b_i)_{i=1}^n\sim \rho^n}\sum\left\|\vq_i\right\|_1^2}\\
\le~ &\frac{4(1+\delta) M\sqrt{2d}\|\fN_\varepsilon\|_{\fB_{\varepsilon,\alpha^*}}}{\varepsilon^{3-\alpha^*}\sqrt{n}}.\label{eq:expplus}\end{align}
Similarly, we can show 
\begin{align}
    &\expt_{(a_i,\vw_i,b_i)_{i=1}^n\sim \rho^n}\left [-\sup_{\vz}\frac{1}{n}\sum\left({\varepsilon^{-2+\alpha^*}} \zeta_i - \fN_\varepsilon\right)\right]
    \le\frac{4(1+\delta) M\sqrt{2d}\|\fN_\varepsilon\|_{\fB_{\varepsilon,\alpha^*}}}{\varepsilon^{3-\alpha^*}\sqrt{n}},
\end{align}
from which and \cref{eq:expplus}, we finally can show that
\begin{align}    \label{first}
\expt_{(a_i,\vw_i,b_i)_{i=1}^n\sim\rho^n}
\sup_{\vz}\left|\frac{1}{n}\sum\left({\varepsilon^{-2+\alpha^*}} \zeta_i - 
\fN_\varepsilon 
\right)\right|
&\le\frac{4(1+\delta)M\sqrt{2d}\|\fN_\varepsilon\|_{\fB_{\varepsilon,\alpha^*}}}{\varepsilon^{3-\alpha^*}\sqrt{n}},
\end{align}
and furthermore
\begin{align}
&\expt_{(a_i,\vw_i,b_i)_{i=1}^n\sim \rho^n}
\frac{1}{n}\sum
\left(|a_i|\left(\|\vw_i\|_1^3+2\|\vw_i\|_1^2|b_i|+\|\vw_i\|_1|b_i|^2\right)\right)^2
\le (1+\delta)^2\|\fN_\varepsilon\|^2_{\fB_{\varepsilon,\alpha^*}}.
\end{align}
Denote by $\fE_1$ and $\fE_2$ the two terms in the expectations, i.e.,
\begin{align}
\fE_1(\left\{a_i,\vw_i,b_i\right\}) &= \sup_{\vz}\left|\frac{1}{n}\sum\left({\varepsilon^{-2+\alpha^*}} \zeta_i - 
\fN_\varepsilon 
\right)\right|
\\
\fE_2(\left\{a_i,\vw_i,b_i\right\}) &= \frac{1}{n}\sum
\left(|a_i|\left(\|\vw_i\|_1^3+2\|\vw_i\|_1^2|b_i|+\|\vw_i\|_1|b_i|^2\right)\right)^2,
\end{align}
and define two events 
\begin{align}
E_1 = \left\{ \fE_1 < \frac{8 M\sqrt{2d}\|\fN_\varepsilon\|_{\fB_{\varepsilon,\alpha^*}}} 
{\varepsilon^{3-\alpha^*}\sqrt{n}}\right\}, \quad
E_2 = \left\{ \fE_2 < 3\|\fN_\varepsilon\|^2_{\fB_{\varepsilon,\alpha^*}}\right\}.
\end{align}
Based on Markov's inequality, we have
\begin{align}
    \rmP(E_1) &\ge 1 - \frac{\expt (\fE_1)}{\frac{8 M\sqrt{2d}\|\fN_\varepsilon\|_{\fB_{\varepsilon,\alpha^*}}}{\varepsilon^{3-\alpha^*}\sqrt{n}}} > \frac{7}{16}, \notag \\
    \rmP(E_2) &\ge 1 - \frac{\expt (\fE_2)}{3\|\fN_\varepsilon\|^2_{\fB_{\varepsilon,\alpha^*}}} > \frac{37}{64},
\end{align}
so that \(\rmP(E_1 \cap E_2) > \frac{1}{64}\), indicating that there exist 
\((a_i, \vw_i, b_i)_{i=1}^n\) such that \cref{first_part} and \cref{order1}
are true. We can collect all such weights and construct the NNs
in $\mathcal{A}_n$.
\end{proof}
Based on \cref{bound}, we know that when \(\alpha^*\) satisfies \(\|\mathcal{N}_\varepsilon(\vx,\vy)\|_{\mathcal{B}_{\varepsilon,\alpha^*}} \leq 1\), it ensures that the average magnitude of the parameters is not large. However, we also do not want to choose a too small $\alpha^{*}$, as that might result in a larger $n$ reaching the same approximation accuracy due to the increase in the denominator in the upper bound of \cref{first_part}. Therefore, we suggest choosing an \(\alpha^*\) such that \(\|\mathcal{N}_\varepsilon(\vx,\vy)\|_{\mathcal{B}_{\varepsilon,\alpha^*}} \approx 1\). Based on \cref{twobarron}, we know that for a fixed \(\varepsilon\), a larger \(\alpha^*\) results in a larger Barron norm. Therefore,  we can pick the following \(\alpha\):
\begin{equation}
    \alpha = \sup \left\{\alpha^* \mid \|\mathcal{N}_\varepsilon(\vx,\vy)\|_{\mathcal{B}_{\varepsilon,\alpha^*}} \leq 1\right\}.\label{definealpha}
\end{equation}
Based on \cref{twobarron}, for any fixed \(\varepsilon > 0\), there are either infinite elements or none in 
\[
\left\{\alpha^* \mid \|\mathcal{N}_\varepsilon(\vx,\vy)\|_{\mathcal{B}_{\varepsilon,\alpha^*}} \le 1\right\}.
\]
If \(\left\{\alpha^* \mid \|\mathcal{N}_\varepsilon(\vx,\vy)\|_{\mathcal{B}_{\varepsilon,\alpha^*}} \le 1\right\} = \emptyset\), we define \(\alpha = -\infty\). This indicates that no large-scale NNs can even roughly approximate the Green's function of \cref{poisson} for \(\beta = 1\). This issue arises from the excessive complexity of \(\vA(\vx)\) and \(\Omega\), scenarios which will not be addressed in this paper. 
If there are infinite elements in that set, there must be an upper bound based on  
\cref{twobarron}; therefore, \(\alpha\) cannot be \(+\infty\). By choosing such \(\alpha\), we can ensure that  $n$ in the large-scale network can be small. 

\cref{bound} provides the approximation property of the large-scale NN. Next, we analyze the approximation property of the MSNN. Before proceeding to \cref{bound2}, we discuss an assumption in \cref{bound2} in more detail. In the theorem, we assume
\[
r_n(\vx,\vy) = \nabla \cdot \left( \vA(\vx) \nabla \phi_1(\vz; \vtheta_1) \right) - \mathcal{N}_\varepsilon(\vx,\vy)
\]
is a $1$-$\vA(\vx)$-Barron function with respect to $\sigma(x)$.  This assumption is reasonable due to \cite{wojtowytsch2022representation}, which shows that sufficiently smooth functions belong to Barron-type spaces. Here, for any \(\varepsilon > 0\), \(\mathcal{N}_\varepsilon\) is smooth and \(\nabla \cdot \left( \vA(\vx) \nabla \phi_1(\vz; \vtheta_1) \right)\) is also smooth if \(\vA\) and \(\sigma\) are smooth. Without being rigorous, we treat this as an assumption.

\begin{thm}\label{bound2}
Under the same assumptions in \cref{bound}, if there is a large-scale 
NN, denoted by $\phi_1(\vz; {\vtheta_1}) \in\mathcal{A}_n$, such that 
$$r_n(\vx,\vy)=\nabla \cdot \left( \vA(\vx) \nabla \phi_1(\vz; {\vtheta_1}) \right) - \mathcal{N}_\varepsilon(\vx,\vy)$$
is a $1$-$\vA(\vx)$-Barron function with respect to $\sigma(x)$,  for any $m \ge 1$, there exists a small-scale NN with activation function $\sigma(x)$, 
\[
\phi_2(\vz; \vtheta_2) = \sum_{i=n+1}^{m+n}  \frac{1}{m}a_i \sigma\left(\vw_i \cdot \vz +  b_i\right)
\]
such that for $\phi(\vz, \vtheta)=\phi_1(\vz, \vtheta_1) + \phi_2(\vz, \vtheta_2)$ in \cref{nm}, we have
\begin{equation}
\sup_{\vx,\vy} \left| \nabla \cdot \left( \vA(\vx) \nabla \phi(\vz; \vtheta) \right) - \mathcal{N}_\varepsilon(\vx,\vy)\right| \le 
\|r_n(\vx,\vy)\|_{\mathcal{B}_1} \frac{8 M \sqrt{2d}}{\sqrt{m}}
\label{second_part}
\end{equation}
and 
\begin{equation}
\frac{1}{m} \sum_{i=n+1}^{n+m} \left(|a_i| \left(\|\vw_i\|_1^3 + 2\|\vw_i\|_1^2 |b_i| + \|\vw_i\|_1 |b_i|^2\right)\right)^2 \le 3\|r_n(\vx,\vy)\|^2_{\mathcal{B}_1}.\label{order2}
\end{equation}
\end{thm}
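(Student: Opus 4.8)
The plan is to exploit the linearity of the operator $\nabla\cdot(\vA(\vx)\nabla\,\cdot\,)$ in order to reduce the claim to a single application of \cref{bound} at scale $\varepsilon=1$, with the target function taken to be the residual $-r_n$ in place of $\mathcal{N}_\varepsilon$. First I would record the decomposition that comes from linearity: since $\phi=\phi_1+\phi_2$ and the second-order operator is linear,
\begin{align*}
\nabla\cdot(\vA(\vx)\nabla\phi)-\mathcal{N}_\varepsilon
&=\big[\nabla\cdot(\vA(\vx)\nabla\phi_1)-\mathcal{N}_\varepsilon\big]+\nabla\cdot(\vA(\vx)\nabla\phi_2)\\
&=r_n+\nabla\cdot(\vA(\vx)\nabla\phi_2).
\end{align*}
Thus controlling the total residual $\sup_{\vz}\,|\nabla\cdot(\vA(\vx)\nabla\phi)-\mathcal{N}_\varepsilon|$ is exactly the problem of approximating the function $-r_n$ by $\nabla\cdot(\vA(\vx)\nabla\phi_2)$ in the $L^\infty$ sense, which is precisely the type of statement that \cref{bound} delivers.

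The key step is the observation that the proof of \cref{bound} used only the hypothesis that its target is an $\varepsilon,\alpha^*$-$\vA(\vx)$-Barron function together with \cref{assump2}; nothing in the Rademacher-symmetrization argument or the Markov-inequality selection step is specific to $\mathcal{N}_\varepsilon$. Since $r_n$ is assumed to be a $1$-$\vA(\vx)$-Barron function, and since the Barron-norm constant is symmetric under sign change—the representing distribution for $-r_n$ is obtained from that of $r_n$ by replacing $a$ with $-a$, which leaves the integrand in \cref{order2} unchanged, so $\|{-r_n}\|_{\mathcal{B}_1}=\|r_n\|_{\mathcal{B}_1}$—the function $-r_n$ is likewise a $1$-$\vA(\vx)$-Barron function with the same norm. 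I would therefore invoke \cref{bound} verbatim with $\varepsilon=1$, target $-r_n$, $m$ neurons, and activation $\sigma_\varepsilon=\sigma$ (the choice $\varepsilon=1$ collapses $\sigma_\varepsilon=\varepsilon^{\alpha}\sigma(\,\cdot\,/\varepsilon^{\beta})$ to $\sigma$ independently of $\alpha,\beta$). This yields coefficients $(a_i,\vw_i,b_i)_{i=n+1}^{n+m}$ and the network $\phi_2(\vz;\vtheta_2)=\frac1m\sum_{i=n+1}^{n+m}a_i\sigma(\vw_i\cdot\vz+b_i)$ satisfying
\[
\sup_{\vz}\big|\nabla\cdot(\vA(\vx)\nabla\phi_2)-(-r_n)\big|\le\frac{8M\sqrt{2d}\,\|r_n\|_{\mathcal{B}_1}}{\sqrt{m}},
\]
where the factor $\varepsilon^{-\alpha^*+3}$ in the denominator of \cref{first_part} equals $1$ at $\varepsilon=1$, together with the parameter bound \cref{order2}. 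Substituting this into the decomposition above gives $\sup_{\vz}|r_n+\nabla\cdot(\vA(\vx)\nabla\phi_2)|\le \tfrac{8M\sqrt{2d}\,\|r_n\|_{\mathcal{B}_1}}{\sqrt{m}}$, which is exactly \cref{second_part}, and \cref{order2} is inherited directly.

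I expect the only real subtlety—rather than a genuine obstacle—to be the bookkeeping needed to certify that \cref{bound} truly applies to an arbitrary $1$-$\vA(\vx)$-Barron target and that the specialization $\varepsilon=1$ is clean. Concretely, one should verify that the two Lipschitz inequalities of \cref{assump2}, the repeated Rademacher peeling, the passage from the $\ell_1$ to the $\ell_2$ norm via $\|\cdot\|_1\le\sqrt{2d}\|\cdot\|_2$, and the two-event Markov argument all go through without ever referencing the explicit Gaussian form of $\mathcal{N}_\varepsilon$—only its Barron representation \cref{barronpre} and its norm \cref{barronconstant}. Since $\varepsilon=1$ simply removes every $\varepsilon$-dependent prefactor, the proof specializes transparently, and the sign flip from $r_n$ to $-r_n$ is absorbed by the symmetry of the Barron-norm constant noted above.
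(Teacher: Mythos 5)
Your proposal is correct and takes essentially the same approach as the paper: the paper's own proof of this theorem is the one-line remark that it is ``similar to that of \cref{bound}, and is therefore omitted,'' which is precisely the re-application of that argument at scale $\varepsilon=1$ to the residual that you spell out. Your additional bookkeeping---the linearity decomposition, the sign symmetry $\|{-r_n}\|_{\mathcal{B}_1}=\|r_n\|_{\mathcal{B}_1}$, and the observation that the proof of \cref{bound} never uses the Gaussian form of $\mathcal{N}_\varepsilon$, only its Barron representation---is exactly what the paper leaves implicit.
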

\begin{proof}
The proof is similar to that of \cref{bound}, and is therefore omitted here.
\end{proof}

Theorems \ref{bound} and \ref{bound2} provide the approximation properties of the MSNN and illustrate the advantages of employing the MSNN in \cref{nm} over single-scale NNs. If only a large-scale NN \(\phi_1(\vz; \vtheta_1)\) is used to approximate the Green's function,  \(\mathcal{O}\left( (\varepsilon^{-\alpha+3} \widehat{\epsilon})^{-2} \right)\) parameters are required to achieve an error of \(\mathcal{O}(\widehat{\epsilon})\) according to \cref{first_part}. If we only have a small-scale NN, i.e., \(\phi_1 = 0\) in \cref{bound2}, the magnitude of the parameters will be large due to \cref{order2} and the fact that \(\|\mathcal{N}_\varepsilon\|_{\fB_1}\) is large. 
On the other hand, by adopting the MSNN approach, it is possible to use only \(\mathcal{O}\left( \varepsilon^{2(\alpha - 3)} + \widehat{\epsilon}^{-2} \right)\) parameters to achieve an error of \(\mathcal{O}(\widehat{\epsilon})\).
The reason is that when using MSNN with $m \ge 1$ and under the assumption that 
$\sup_{\vx,\vy} \left|r_n(\vx, \vy)\right|=\fO(1)$,
we can possibly have the estimate $\norm{r_n(\vx, \vy)}_{\fB_1}=\fO(1).$
Then the upper bound in \cref{second_part} can be simplified as
\begin{equation}\label{2order}
\sup_{\vz} \left| \nabla \cdot \left( \vA(\vx) \nabla \phi(\vz; \vtheta) \right) - 
\mathcal{N}_\varepsilon(\vx,\vy)
\right| 
\le \frac{C_2}{\sqrt{m}},
\end{equation}
 where \(C_2\) is $\fO(1)$ respect to \(\varepsilon\).

For \(\|r_n(\vx, \vy)\|_{\mathcal{B}_1} = \mathcal{O}(1)\) to hold, two conditions must be met. First, \(r_n(\vx, \vy)\) must lie in Barron-type spaces, which requires the function to have a smooth structure. This was discussed before \cref{bound2} and shown to be reasonable for \(r_n(\vx, \vy)\). Second, the norm should be of order one. Based on \cite{weinan2022representation,chen2021representation}, we know that the \(H^s\)-norm, when \(s > \frac{d}{2} + 3\), can control the Barron-type norm, and the $H^s$-norm can be controlled by the $C^s$-norm \cite{evans2022partial}, where 
\[
\|f\|_{C^s\left([0,1]^d\right)}:=\sum_{\|\boldsymbol{\alpha}\|_1 \leq s}\left\|\partial^{\valpha} f\right\|_{C^{0}\left([0,1]^d\right)}.
\]
When the \(C^s\)-norm has the same scale as the \(C^0\)-norm, we can infer that when the \(C^0\)-norm is of order one, the Barron-type norm can be controlled by an order one term. This assumption may not hold for a general target function; however, it is reasonable in our case. 
For the region where $\vx$ is close to $\vy$, \(\nabla \cdot \left( \vA(\vx) \nabla \phi_1(\vz; \vtheta_1) \right)\) will reduce the high-frequency components of \(\mathcal{N}_\varepsilon\), ensuring that in this region, the \(C^0\)-norm is comparable to the \(C^s\)-norm. This is the motivation for establishing the large-scale neural network first. For the region where $\vx$ is away from $\vy$, since \(\mathcal{N}_\varepsilon\) is smooth and  \(\nabla \cdot \left( \vA(\vx) \nabla \phi_1(\vz; \vtheta_1) \right)\) does not explode when the activation function is local, such as \(\exp(-x^2)\) or \(\tanh(x)\), the \(C^0\)-norm has also the same scale as the \(C^s\)-norm. As a result, for \(r_n\), the \(C^0\)-norm can have the same scale as the Barron-type norm. A rigorous proof of this assumption would require a constructive approach, rather than the existence proof presented in this paper, especially regarding how to establish \(\phi_1(\vz; \vtheta_1)\) to achieve the goal stated above, and not merely meet the error bound in \cref{bound}. We will explore this in future work.

\cref{bound2} also shows that although various large-scale NNs can effectively reduce the approximation error of Green's functions, merely reducing the error scale does not guarantee that the small-scale NN will successfully learn the residual, particularly if the residual is not smooth and displays noise-like characteristics.  \cref{bound2} asserts that successful learning of the residual is more likely if it resides within the Barron-type space, suggesting a higher degree of smoothness, as outlined in \cite{barron1993universal,lu2021priori}. In practical implementations, to identify suitable large-scale NNs, we can initially train a large-scale NN to approximate the Green's function, then add a small-scale NN  and finally conduct simultaneous training of both scales. This dual-scale training approach not only further refines the large-scale NN to pinpoint the correct large-scale components but also improves the small-scale NN's capacity to capture finer details, thereby ensuring thorough learning across various scales.

{Combining (ii) in Propositions \ref{prop:u_eps_convergence} and \ref{prop:NN_vs_Geps}, and Theorem \ref{bound2} yields a complete error estimate for the multiscale–network approximation of the Green's function.

\begin{cor}\label{cor:multiscale_error}
Assume the hypotheses of Theorem \ref{bound2}.  
Let $\varepsilon>0$, and suppose $\vA=(a_{ij})_{i,j=1}^{d}\in W^{1,\infty}(\Omega;\mathbb R^{d\times d})$ is uniformly elliptic, where $\Omega\subset[0,1]^{d}$ is a bounded open domain.  
Let $\phi(\vx,\vy;\boldsymbol\theta)$ be the multiscale neural network constructed in Theorem \ref{bound2}, satisfying the homogeneous Dirichlet condition in $\vx$ for every fixed $\vy\in\Omega$ defined in Theorem \ref{bound2}.  
For any $f\in H^{1}_{0}(\Omega)\cap L^{\infty}(\Omega)$ define
\[
    u(\vx)=\int_{\Omega}G(\vx,\vy)\,f(\vy)\,\D\vy,
    \qquad
    u_{\boldsymbol\theta}(\vx)=\int_{\Omega}\phi(\vx,\vy;\boldsymbol\theta)\,f(\vy)\,\D\vy,
\]
where $G$ is the Green's function of \eqref{poisson}.  
Then $u_{\boldsymbol\theta}\in H^{1}_{0}(\Omega)$ and
\[
    \|u-u_{\boldsymbol\theta}\|_{H^{1}(\Omega)}
    \;\le\;
    C\,\varepsilon\,\|f\|_{H^{1}(\Omega)},
\]
with a constant $C>0$ independent of $f$ and $\varepsilon$. The network
contains
\(
   N=\mathcal O\!\bigl(\varepsilon^{\,2(\alpha-3)}+\varepsilon^{-2}\bigr)
   ~\text{parameters},
\)
where
$\alpha$ is defined in \eqref{definealpha}.
\end{cor}}

\subsection{Training strategies}
\cref{bound} and \cref{bound2} suggest to first train the large-scale network $\phi_1$ to approximately solve  \cref{approximatedd}, i.e., 
\begin{equation}
   \nabla\cdot(\vA(\vx)\nabla \phi_1)\approx \mathcal{N}_\varepsilon(\vx,\vy)=\left(\frac{1}{\varepsilon\sqrt{\pi}}\right)^d\exp\left(-\frac{\Vert\vx-\vy\Vert_2^2}{\varepsilon^2}\right).
   \end{equation}
Given the form of $\phi_1$ defined in \cref{eq:ph11} and \cref{eq:diffphi0}, we further have
\begin{align}
\nabla\cdot(\vA(\vx)\nabla \phi_1) &= 
\frac{\varepsilon^{\alpha-2}}{n}
\sum_{i=1}^{n}\vw_{i,\vx}^\top \vA(\vx)\vw_{i,\vx}a_i\sigma''\left(\vw_i \cdot (\varepsilon^{-1}\vx,\varepsilon^{-1}\vy) + \varepsilon^{-1} b_i\right) \\
&+\frac{\varepsilon^{\alpha-2}}{n}\sum_{i=1}^{n}\varepsilon(\nabla\cdot\vA(\vx))^\top \vw_{i,\vx} \sigma'\left(\vw_i \cdot (\varepsilon^{-1}\vx,\varepsilon^{-1}\vy) + \varepsilon^{-1} b_i\right).
\end{align}

In the previous section, we propose one way to choose an  $\alpha$ in \cref{definealpha} under the condition that $\beta = 1$ for the large-scale NN. 
However, this choice does not yield an explicitly computable formula for $\alpha$. 
A practical heuristic for selecting $\alpha$ is to ensure that the magnitude of the NN parameters has roughly the same order and is bounded, which makes the NN easier to train, particularly when $\varepsilon$ is small, as shown in \cite{liu2020multi,wang2020multi,zhou2013causal}. If all the parameters are of $\mathcal{O}(1)$, 
apparently it follows that $\nabla\cdot(\vA(\vx)\nabla \phi_1)$ is of
$\mathcal{O}(\varepsilon^{\alpha-2})$. 
Therefore, to match the order of the right-hand side,
$\mathcal{N}_{\varepsilon}(\vx,\vy)$, which is $\mathcal{O}(\varepsilon^{-d})$,
we should set $\alpha=2-d$.



The structure of the MSNN we use in practice is shown in \cref{fig:net}, following a similar approach by Teng et al. \cite{teng2022learning}.
We include an additional input feature 
that is the distance of  coordinates $\vx$ and $\vy$, i.e.,
$\vx - \vy$, as the value of the Green's function generally depends on the distance between $\vx$ and $\vy$ \cite{teng2022learning}.
The training process begins with the training of the large-scale network, followed by a simultaneous training of both networks of the large and small scales.

\begin{figure}[htbp]
    \centering
    \includegraphics[width=0.7\linewidth]{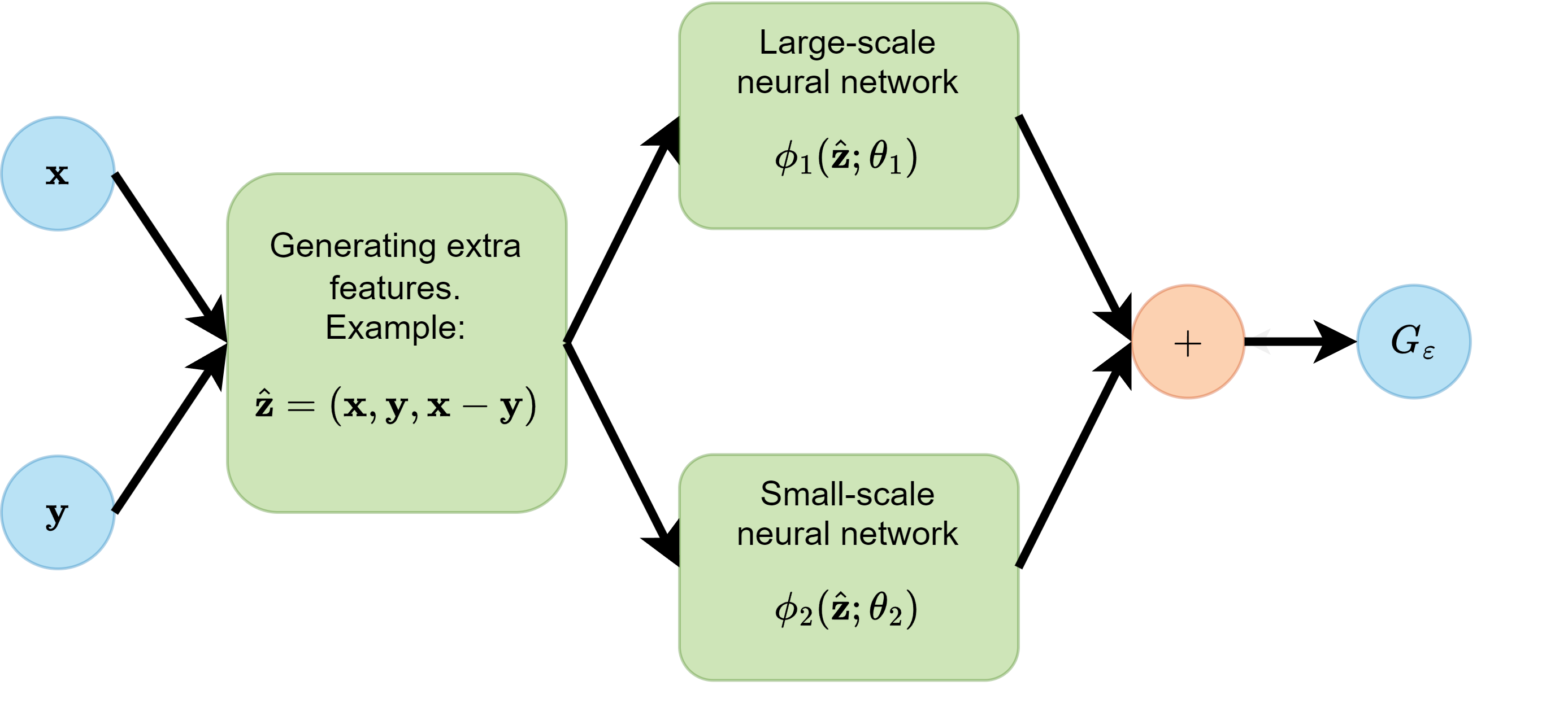}
    \caption{The design of the multiscale NN, incorporating $\vx - \vy$ as an additional feature.}
    \label{fig:net}
\end{figure}

As training a NN across the entire domain $\Omega$ can be challenging, we employ the domain decomposition (DD) strategy proposed by Teng et al. \cite{teng2022learning}. 
We divide $\Omega$ into $p$ disjoint subdomains such that $\Omega=\bigcup\Omega_i$ for $i=1,2,\ldots,p$, and train one NN $\phi^i((\vx,\vy),\vtheta^i)$ for each subdomain $\Omega\times\Omega_i$.
Instead of using a naive partitioning as in \cite{teng2022learning}, we exploit graph partitioners as often used in general sparse matrix packages \cite{xu2024parallel,tianshi} in order to handle general domains, by which 
we divide 1D domains into line segments and 2D domains into triangular elements. 
We denoted by $\mathcal{T}_C$ the mesh of $\Omega$,
which does not need to be fine, as we do not need the subdomains to be perfectly balanced. The mesh $\mathcal{T}_C$ can be associated with a graph $\mathcal{G}_C(\mathcal{V},\mathcal{E})$, where the vertex set $\mathcal{V}$ represents the elements in $\mathcal{T}_C$, 
and two vertices are connected by an edge in $\mathcal{E}$ if the corresponding elements are neighbors. 
Then, a $p$-way  partition with edge separators is applied to $\mathcal{G}_C$, dividing it into $p$ subgraphs.
Finally, a subdomain is formed by the union of the elements corresponding to the subgraph. 
An illustration of the DD for a rectangular domain is provided in the left panel in \cref{fig:dd}.

\begin{figure}[htbp]
    \centering
    \includegraphics[width=0.3\linewidth]{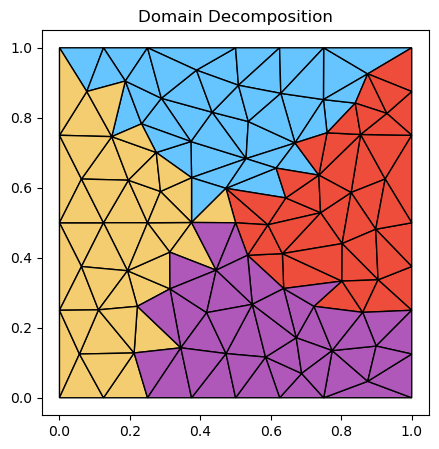}
    \includegraphics[width=0.3\linewidth]{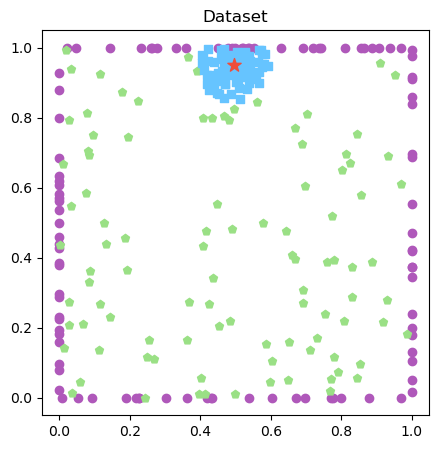}
    \includegraphics[width=0.3\linewidth]{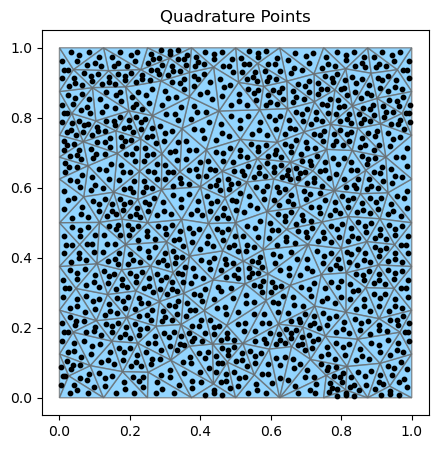}
    \caption{DD, dataset, and quadrature points in a rectangular domain. Left: DD using a coarse mesh $\mathcal{T}_C$ and $4$-way graph partitioning. Middle: dataset with $\vy$ from one subdomain (the red star), $\vx$ close to $\vy$ (the blue squares), $\vx$ uniformly sampled in $\Omega$ (the green pentagon), and $\vx$ on boundary (the purple dots). Right: Quadrature points (the black dots) used in a fine mesh $\mathcal{T}_F$.}
    \label{fig:dd}
\end{figure}

The training data are generated as follows. 
For subdomain $i$, we sample a few points $S^i\in\Omega_i$, and for each $\vy\in  S^{i}$, we sample $\vx$ points to form the following sets of $(\vx, \vy)$ pair: 1) $S_{\vy,bdry}^i$ with $\vx$ on the domain boundary $\partial\Omega$; 2)  $S_{\vy,far}^i$ with $\vx$ uniformly distributed within the domain $\Omega$; and 3)  $S_{\vy,near}^i$ with $\vx$ uniformly distributed within a predefined radius from $\vy$ in the domain $\Omega$. 
The dataset associated with $i$-th subdomain is constructed as
\begin{align}
    & S_{bdry}^{i} := \cup_{\vy\in S^i}S_{\vy,bdry}^i\\
    & S_{int}^{i} := \left(\cup_{\vy\in S^i}S_{\vy,near}^i\right)\cup\left(\cup_{\vy\in S^i}S_{\vy,far}^i\right),
\end{align}
where $S_{bdry}^{i}$ are data points on the boundary and $S_{int}^{i}$ are data points within the domain. 
The data points sampled associated with one $\vy$ are illustrated in the middle panel in \cref{fig:dd}.

Defined on these data points, the loss function $L^i$ on the $i$-th subdomain is defined as
\begin{equation}
    L^i = w_{bdry}L_{bdry}^i + w_{res}L_{res}^i + w_{sym}L_{sym}^i,
\end{equation}
which was first proposed in \cite{teng2022learning}, where 
\begin{align}
    & L^i_{bdry}(\vtheta^i) = \frac{1}{|S_{bdry}^{i}|}\sum_{(\vx,\vy) \in S_{bdry}^{i}} \phi^i((\vx,\vy),\vtheta^i)^2 \\
    & L^i_{res}(\vtheta^i) = \frac{1}{|S_{int}^{i}|}\sum_{(\vx,\vy)\in S_{int}^{i}} \left[\mathcal{L}\phi^i((\vx,\vy),\vtheta^i)-\mathcal{N}_{\varepsilon}(\vx,\vy)\right]^2 \\
    & L^i_{sym}(\vtheta^i) = \frac{1}{|S_{int}^{i}|}\sum_{(\vx,\vy) \in S_{int}^{i}} \left[\phi^i((\vx,\vy),\vtheta^i)-\phi^i((\vy,\vx),\vtheta^i)\right]^2.
\end{align}
Here, $L^i_{bdry}$ represents the error on the boundary, $L^i_{res}$ measures the point-wise residual of the PDE, and $L^i_{sym}$ is designed to ensure the symmetry $\phi^i(x,y)=\phi^i(y,x)$.
The weight terms $w_{bdry}$, $w_{res}$, and $w_{sym}$ are used to balance these loss terms.
The NNs on those subdomains are trained independently in parallel.

After the model is trained, numerical integration is computed for the final solution.
The quadrature points are selected from a much finer mesh of the domain $\Omega$, which is denoted by $\mathcal{T}_F$.
The numerical integration uses the Gauss-Legendre rule for line segments and Dunavant’s integration rules for triangular elements \cite{dunavant1985high}.

The overall procedure for solving PDE problems using the multiscale NN learning approach is illustrated in \cref{fig:diagram}. 
Before concluding this section, it is worth noting that the analysis can be easily generalized to more complex linear differential operators. In \cref{num}, we present numerical examples with more intricate operators to demonstrate the effectiveness of our method.

\begin{figure}[htb]
    \centering
    \includegraphics[width=0.95\linewidth]{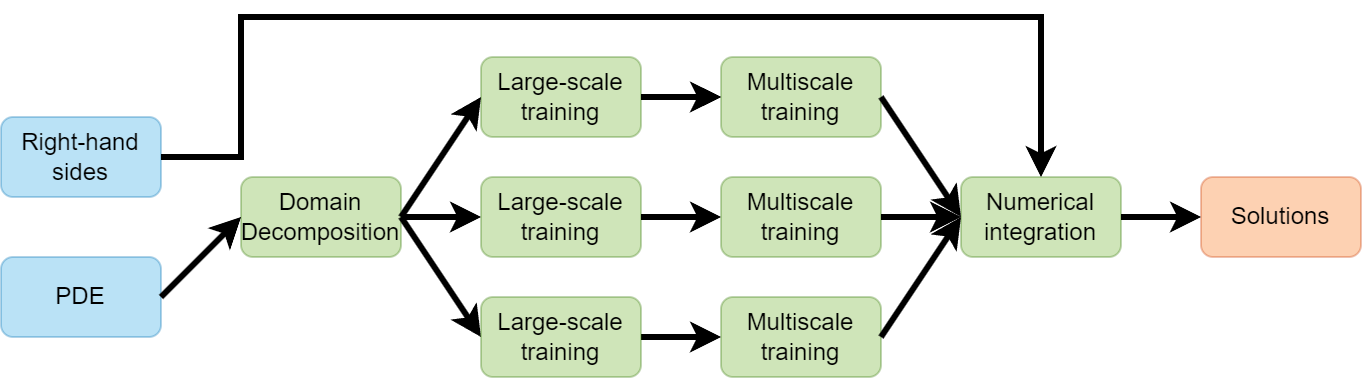}
    \caption{The overall procedure of solving PDEs by the multiscale NN learning approach
for Green's function. A large-scale network
    in each subdomain is first
    trained, and both the large- and small-scale NNs in the subdomain are trained together.
    The PDE solution is computed by numerical integration with the learned Green's function and the right-hand-side function
    of the PDE.
    \label{fig:diagram}}
\end{figure}

\section{Numerical Experiments}\label{num}
This section outlines the experiments conducted using our MSNN approach to approximate Green’s functions. 
We specifically compare the performance of our multiscale model (in the form of $\phi_1+\phi_2$) to the single scale models (in the form of $\phi_1$ or $\phi_2$ alone) for various problems, including convergence in the training process and prediction accuracy.
The following notations are adopted in this section:
\begin{itemize}
\item $\phi^l$: the standalone large-scale network model.
\item $\phi^s$: the standalone small-scale network model.
\item $\phi^m_1$: the large-scale network within the multiscale framework.
\item $\phi^m_2$: the small-scale network within the multiscale framework.
\item $\phi^m$: the multiscale network (combined output of $\phi^m_1$ and $\phi^m_2$).
\end{itemize}


Our experiment code was developed using \texttt{Python} with \texttt{PyTorch} \cite{NEURIPS2019_bdbca288} for the NN implementations and \texttt{NumPy} \cite{harris2020array} for data generation. 
All the NNs used in our experiments are fully connected with linear layers and the {$\tanh$ activation function}. 
The experiments were conducted on a machine equipped with dual 24-core 2.1 GHz Intel Xeon Gold 5318Y CPUs, an NVIDIA H100 GPU, and 1 TB of RAM. 
The versions of the main software/libraries used were \texttt{Python} 3.8.19, \texttt{PyTorch} 2.2.2, \texttt{NumPy} 1.24.3, and \texttt{CUDA} 12.5.
Triangular elements were generated using the \texttt{MeshPy} package \cite{kloeckner2018meshpy}, and we implemented a simple spectral graph partitioning algorithm for graph partitioning.
When sampling data, the radius for sampling ``near points'' is set at $2\varepsilon$. 
In all the experiments, we fix the loss weights $w_{res}$ to $1.0$ and set $w_{bdry}=w_{sym}=\varepsilon^{-d}$.


\subsection{Model Problems}
We consider the following model problem:
\begin{equation}
\begin{cases}
- \Delta u(\vx) - c(\vx)u(\vx) = f(\vx), & \vx \in \Omega \\
u(\vx) = 0, & \vx \in \partial \Omega
\end{cases}\label{eq:model problem}
\end{equation}
We explore various domains $\Omega$ in $\mathbb{R}$ and $\mathbb{R}^2$, with different functions $c(\vx)$ from $\mathbb{R}^d$ to $\mathbb{R}$. 
We benchmark our network’s performance by comparing the numerical solutions obtained from our trained networks against those derived using the Finite Element Method (FEM) with linear elements. 
These comparisons are facilitated using the \texttt{FEniCS} package for FEM \cite{alnaes2015fenics,logg2012automated}.
We refine the mesh $\mathcal{T}_F$ using $\texttt{FEniCS}$ and obtain numerical solutions of high accuracy for comparison.

\subsection{Selection of $\varepsilon$}
Before testing our framework, we first study the influence of $\varepsilon$ on the approximation accuracy of the Green's function since we are using $\mathcal{N}_{\varepsilon}(\vx,\vy)$ as the approximation of {$\delta(\vx-\vy)$ distribution}.

In \cref{fig:gauss seg fixy} and \cref{fig:gauss seg all}, we compare the approximations using various $\varepsilon$ with the exact Green's function
for the 1-D problem with $c=0$ on $\Omega=[0,1]$.
The exact Green’s function is given by
\begin{equation}
G(x,y)=
\begin{cases}
x(1-y), & x < y \\
y(1-x), & x \geq y.
\end{cases}\label{eq:1D Greens}
\end{equation}
We first fix $y=0.95$ and compare the exact $G(x, 0.95)$ with the approximations obtained from FEM and various $\varepsilon$.
As shown in \cref{fig:gauss seg fixy}, the approximation with $\varepsilon=10^{-2}$ is very close to the true solution, whereas larger $\varepsilon$ values cannot yield accurate enough approximations.
In \cref{fig:gauss seg all}, we plot the entire Green's function.
Again, a good approximation can be obtained with $\varepsilon=10^{-2}$.

\begin{figure}[htbp]
    \centering
    \includegraphics[width=0.5\linewidth]{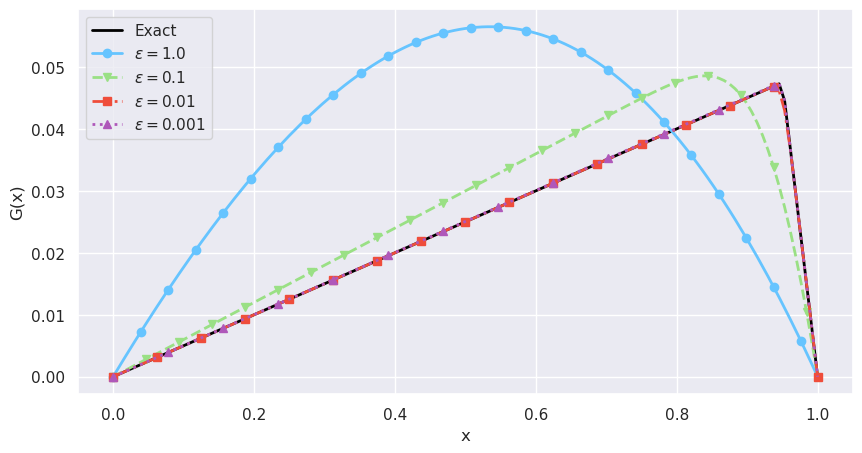}
    \caption{FEM Green's function approximations at $y=0.95$ using $\varepsilon = 1.0, 0.1, 0.01, 0.001$ for the 1-D problem \cref{eq:model problem} with $c=0$ on $\Omega=[0,1]$.}
    \label{fig:gauss seg fixy}
\end{figure}

\begin{figure}[htbp]
    \centering
    \includegraphics[width=0.95\linewidth]{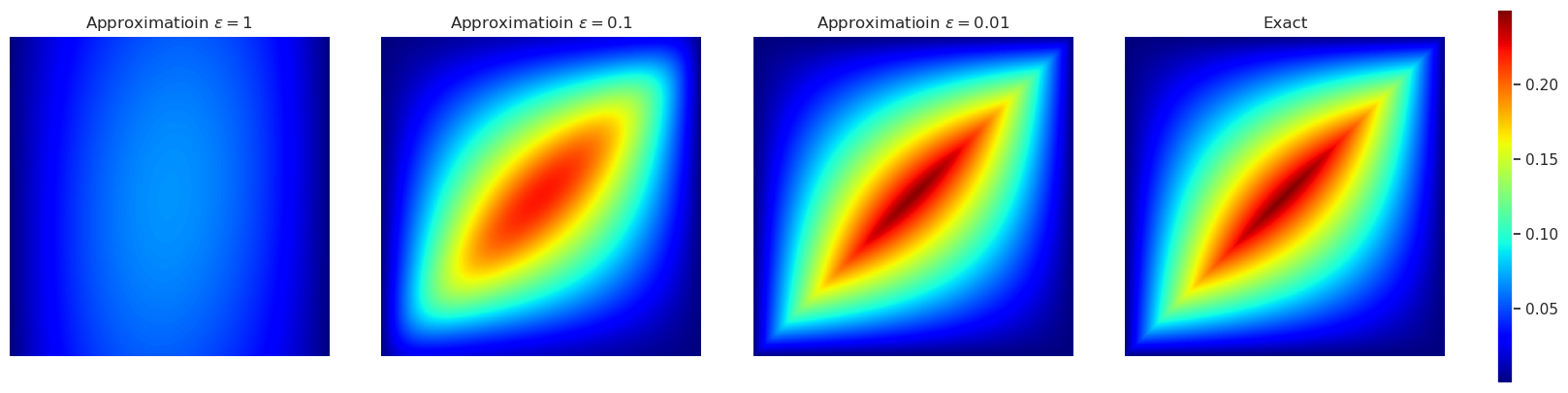}
    \caption{FEM Green's function approximations using $\varepsilon = 1.0, 0.1, 0.01$ for the 1-D problem \cref{eq:model problem} with $c=0$ on $\Omega=[0,1]$. }
    \label{fig:gauss seg all}
\end{figure}

\begin{figure}[htbp]
    \centering
    \includegraphics[width=0.95\linewidth]{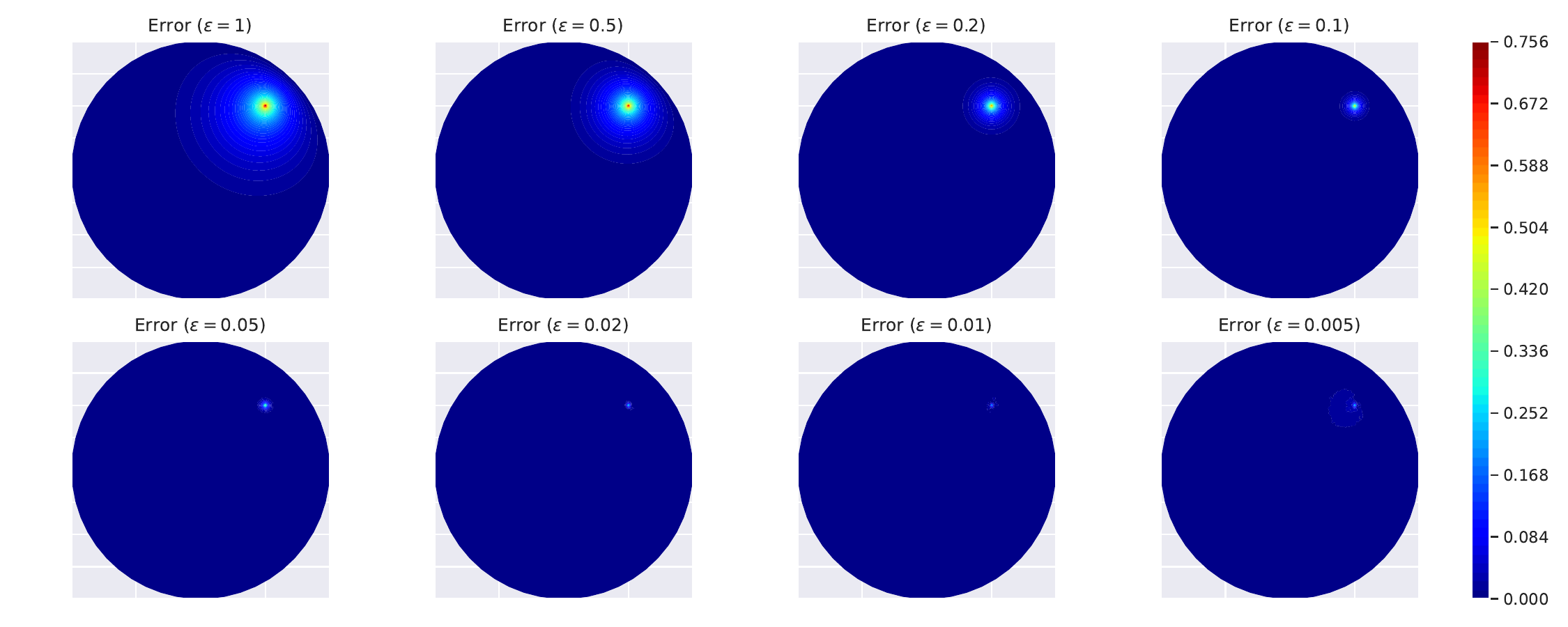}
    \caption{Difference between the FEM solution of Green's function approximation and the exact Green's function for the 2-D problem \cref{eq:model problem} in a unit circle with $c=0$ at $\vy=(0.5,0.5)$.}
    \label{fig:gauss circ}
\end{figure}

Next, we show the Green's function for 2-D problems on a unit circle with fixed $\vy=(0.5,0.5)$.
As we can see from  \cref{fig:gauss circ}, 
it is necessary to use a small $\varepsilon$ (e.g., lower than $0.02$) to obtain a good approximation. 
In our tests, we use $\varepsilon=0.01$ for 1D problems and $\varepsilon=0.02$ for 2D problems, unless otherwise noted.


\subsection{Magnitude of Network Parameters}

In our next experiment, we allow all networks to undergo enough Adam steps to reach a certain accuracy. We plot the histogram of network parameters for each network to justify that we are able to find a multiscale network such that the parameters are bounded with respect to $\varepsilon$.
We run a simple test with $c(x)=1+x^2$ on $\Omega=[0,1]$ with zero boundary condition and only learn the Green's function at a fixed $y=0.95$.
We choose this simple problem  because it is difficult to train the standalone small-scale network $\phi^s$ to converge to a reasonable accuracy for more challenging problems.
In this test, the NNs are configured with just one hidden layer to align with the theoretical expectations.
For multiscale network $\phi^m$, both the large-scale network $\phi^m_1$ and the small-scale network $\phi^m_2$ utilize a hidden layer of $100$ neurons. 
The standalone networks $\phi^l$ and $\phi^s$ are equipped with a hidden layer comprising $200$ neurons, ensuring that the total number of parameters in $\phi^m$, $\phi^l$ and $\phi^s$ remains close for a fair comparison.
We run Adam and check the error every $20,000$ iterations until the $l_{\infty}$ error on the testing set is smaller than $0.01$, which means that the network has a reasonable accuracy.
We test multiple $\varepsilon$ values from $0.01$ to $1.0$.
The results are shown in \cref{fig:seg_fixy_vlap_veps}.

\begin{figure}[htbp]
    \centering
    \includegraphics[width=0.95\linewidth]{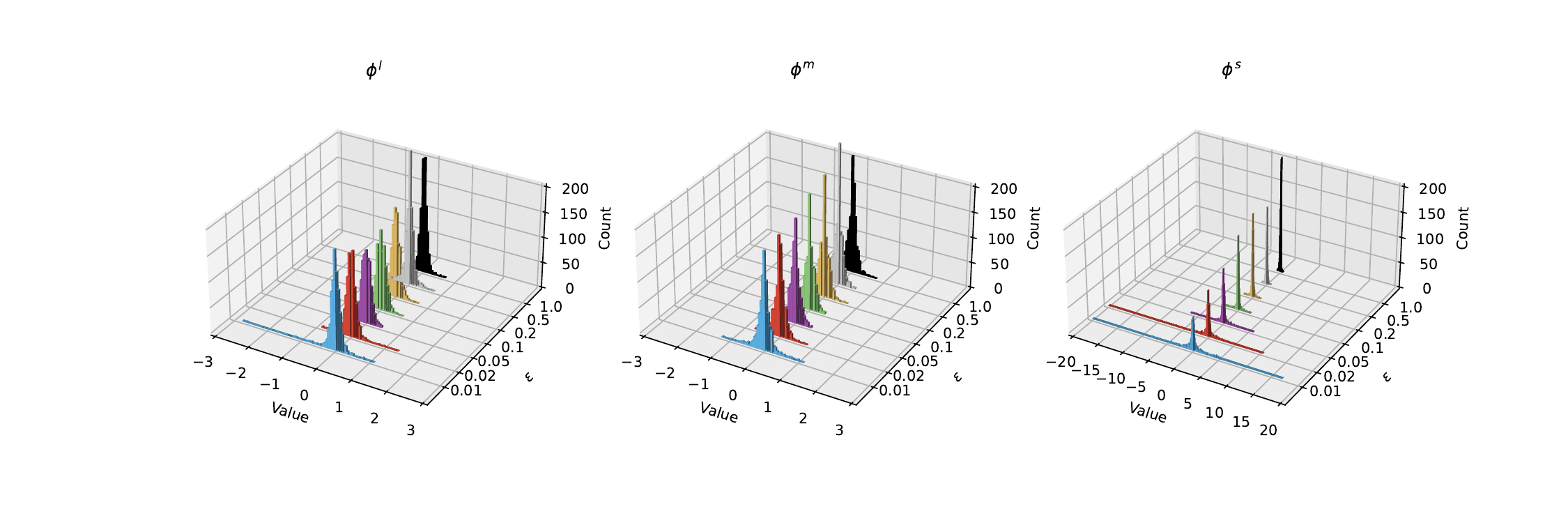}
    \caption{Histogram of model parameters for trained networks on a line segment with $c(x) = 1 + x^2$, $y = 0.95$, and varying $\varepsilon$. The parameter distributions for $\phi^l$ and $\phi^m$ remain relatively stable as $\varepsilon$ changes, whereas the parameter distribution for the small-scale network $\phi^s$ varies significantly. For small $\varepsilon$, $\phi^s$ requires larger parameters, making it more difficult to train.}
    \label{fig:seg_fixy_vlap_veps}
\end{figure}

As shown in the figure, the multiscale network $\phi^m$ exhibits consistent parameter distribution across all the $\varepsilon$ values, with minimal variation in the extreme values of the model parameters. The large-scale network alone demonstrates similar behavior. For large $\varepsilon$, the parameter distributions of the large-scale model $\phi^l$ are comparable to those of the multiscale network. However, when $\varepsilon$ becomes small (e.g., $\varepsilon=0.01$), the model parameters contain a few larger values compared to the multiscale network.
In contrast, a well-trained small-scale network $\phi^s$ requires many large parameter values to represent a good approximation when $\varepsilon$ is small, posing significant challenges for training. In fact, over $10^6$ iterations of Adam were required for $\phi^s$ to converge with $\varepsilon = 0.01$, while $20,000$ iterations is more than enough for $\phi^l$ and $\phi^m$. Finally, we note in passing that $\phi^l$ and $\phi^s$ are equivalent when $\varepsilon = 1.0$. 

\subsection{Test with Fixed $\mathbf{y}$}

In the previous sections, we justified the selection of $\varepsilon$ values in our experiments and studied the distribution of model parameters after the model converges.
In the following experiments, we focus on demonstrating that our multiscale network is easier to train compared to single scale models.

Our next group of experiments are still designed to test convergence with a fixed $\vy$. 
Specifically, we train the network to approximate $G(\vx,\vy)$ for $\vx\in\Omega$ and a single predetermined $\vy$. 
We start with the 1D problem \cref{eq:model problem} with $\Omega=[0,1]$ and $c=0$, $1$, and $1+x^2$, selecting $y$ values at 0.5, 0.7, and 0.95. 
These points were chosen to represent the center, typically internal points, and near-boundary points of the domain, respectively.
For the simple case where $c=0$, both the large-scale network $\phi^m_1$ and the small-scale network $\phi^m_2$ in the multiscale network $\phi^m$ utilize a hidden layer of $20$ neurons. 
The single scale networks $\phi^l$ and $\phi^s$ are equipped with a hidden layer comprising $40$ neurons.
For the cases where $c=1$ and $c=1+x^2$, we increase the number of neurons on the hidden layer to $100$ for $\phi^l$ and $\phi^s$, and set the number of neurons on the hidden layer to $50$ for $\phi^m_1$ and $\phi^m_2$.
During training, only one value of $y$ is sampled per test, along with two boundary points at $0$ and $1$.
Additionally, we uniformly sample $500$ points across $[0,1]$ and sample $500$ points near the selected $\vy$, as previously discussed.
The training regimen for $\phi^m$ involves $1,000$ steps for the large-scale network $\phi^m_1$, followed by $4,000$ steps for the entire network. 
In contrast, the baseline $\phi^l$ and $\phi^s$ undergoes a longer training of $5,000$ steps, resulting in a higher total computational cost, as training $\phi^m_1$ is much cheaper per step.
In this test, we set $\varepsilon=10^{-2}$.
For $c=1$ and $c=1+x^2$, we use a fine mesh to compute a high-accuracy FEM solution as the true solution.
In order to make a fair comparison, we apply the grid search on the initial learning rate and the decay rate for each model and report the result with the minimum error in the tests.
We search for the initial learning rate from $10^{-4}$ to $10^{-1}$, and search for the decay rate from $0.9$ to $1.0$ where the decay is applied every $500$ iterations.
These comparisons are depicted in \cref{fig:seg_fixy_lap slap and vlap}.

\begin{figure}[htbp]
    \centering
    \includegraphics[width=0.95\linewidth]{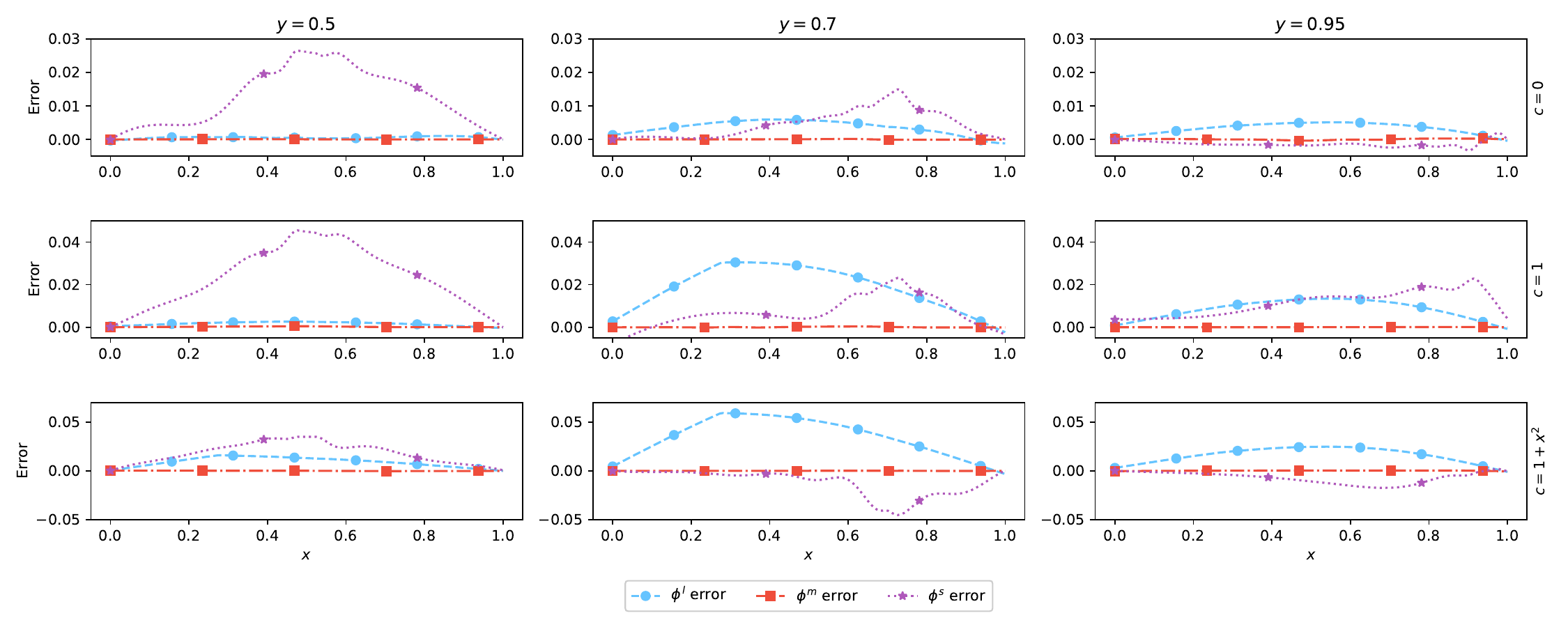}
    \caption{Approximation error comparison of three trained networks $\phi^l$, $\phi^m$ and $\phi^s$ on $\Omega=[0,1]$ at fixed $y$, with three cases of $c(x)=0$, $1$, and $1+x^2$. The reference solution is taken as a high-accuracy FEM solution computed on a fine mesh for $c(x)=1.0$ and $c(x)=1+x^2$. $\phi^l$, $\phi^m$ and $\phi^s$ have roughly the same number of parameters and are trained with the same number of epochs.}
    \label{fig:seg_fixy_lap slap and vlap}
\end{figure}

The figures show that the single-scale networks $\phi^l$ and $\phi^s$ not only incur higher training costs but also exhibit slower convergence compared to the multiscale network $\phi^m$. Notably, even after 5,000 iterations, the predictions of $\phi^s$ remain far from the exact Green's function. The standalone large-scale network $\phi^l$ performs well when $c = 0$ and $y = 0.5$, but its predictions worsen in other cases. In contrast, the multiscale network consistently delivers better predictions across all experiments at a lower cost. This evidence highlights the superior convergence capabilities of multiscale networks.

\begin{figure}[htbp]
    \centering
    \includegraphics[width=1.0\linewidth]{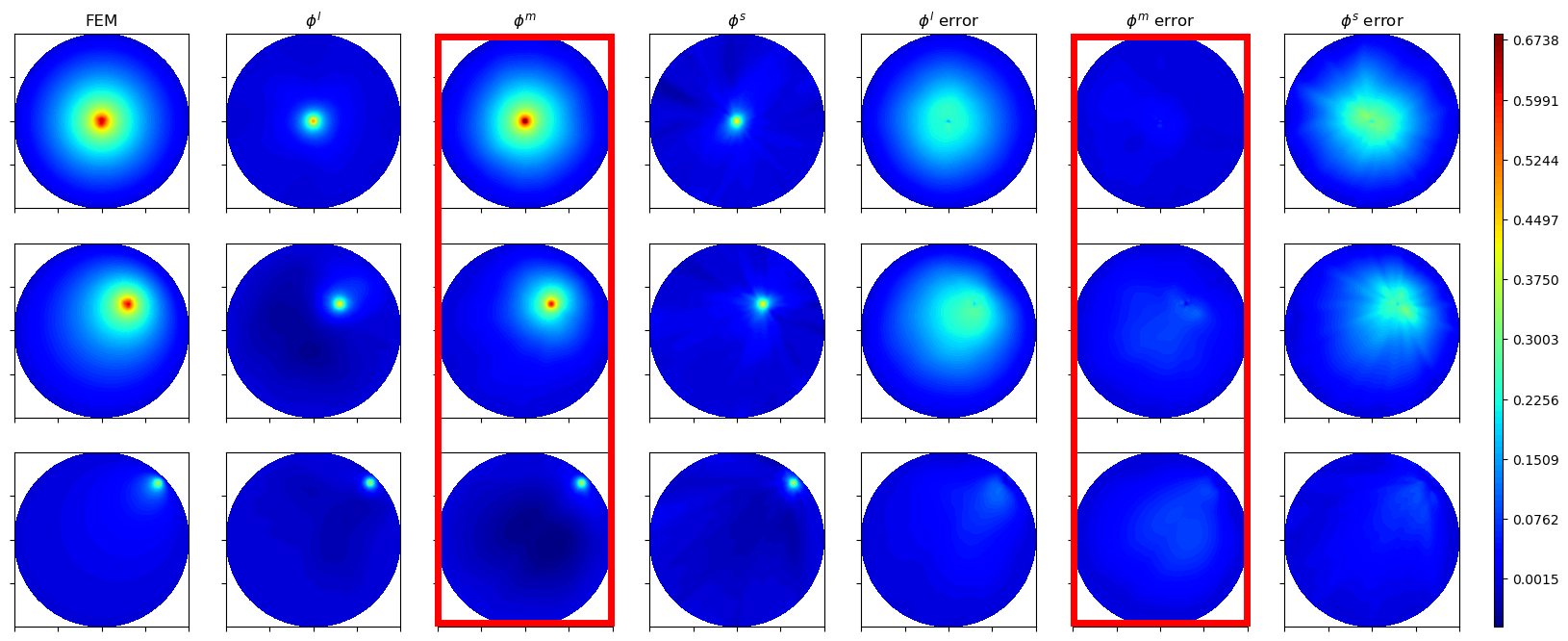}
    \caption{Approximation accuracy comparison for three trained networks $\phi^l$, $\phi^m$ and $\phi^s$ on a unit circle at three fixed $\vy=(0,0)$, $(0.3,0.3)$, and $(0.65,0.65)$ with $c(\vx)=1+x_1^2+x_2^2$. The reference solution is taken as a FEM solution computed on a fine mesh.     The results of $\phi^m$ are highlighted using red boxes. $\phi^l$, $\phi^m$ and $\phi^s$ have roughly the same number of parameters and are trained with the same number of epochs. }
    \label{fig:circ_fixy_vlap}
\end{figure}

Next, we conduct our tests in a 2-D unit circle domain with fixed points $\vy = (0, 0)$, $(0.3, 0.3)$, and $(0.65, 0.65)$. For this set of tests, we use $c(\vx) = 1 + x_1^2 + x_2^2$. Since this problem is more challenging, we employ networks with two hidden layers. The number of neurons in each hidden layer is set to 50 for $\phi^l$ and $\phi^s$, and 30 for $\phi^m_1$ and 40 for $\phi^m_2$. We sample 500 points on the boundary, 5,000 points near $\vy$, and 5,000 points uniformly across the entire domain.
For the MSNN, we first train $\phi^m_1$ for 2,000 steps, followed by training the full $\phi^m$ for 8,000 steps. For the single-scale networks, $\phi^l$ and $\phi^s$ are trained for 10,000 steps. The learning rate is set to $10^{-3}$ in this test. As shown in \cref{fig:circ_fixy_vlap}, our multiscale network consistently provides better predictions.

\subsection{Test on Solution of PDEs}

Finally, we evaluate our complete MSNN framework by solving PDEs. We revisit the 1D model problem on the interval $[0,1]$ with $c = 1 + x^2$. For this test, we partition the interval into 32 subdomains and set $\varepsilon = 0.01$. Each subdomain samples 30 different $y$ values; for each $y$, we associate 2 boundary points, 500 uniformly distributed points within the domain, and 500 points close to $y$.
We use networks with two hidden layers, with 50 neurons per layer for $\phi^l$ and $\phi^s$, 30 neurons for $\phi^m_1$, and 40 for $\phi^m_2$. The large-scale model $\phi^m_1$ is first trained for 4,000 iterations, followed by an additional 16,000 iterations to train the full multiscale network. In contrast, the baseline networks, $\phi^s$ and $\phi^l$, are trained for 20,000 iterations.
As shown in \cref{fig:test seg green and pde}, the results clearly demonstrate that the multiscale network achieves significantly better performance.

\begin{figure}[htb]
    \centering
    \includegraphics[width=0.95\textwidth]{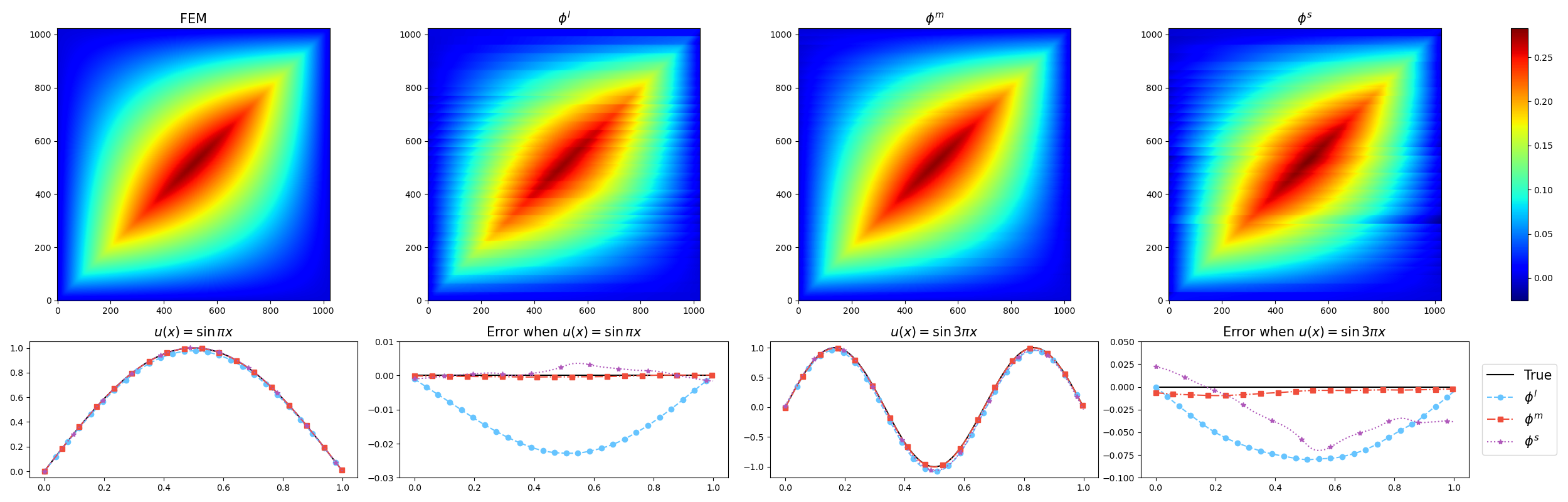}
    \caption{Predicted Green's function and numerical solutions of PDEs. Upper: comparison between predicted Green's function and a high accuracy FEM solution. Lower: comparison between solutions and error when solving PDEs with solutions $u(x) = \sin{(\pi x)}$ and $u(x) = \sin{(3\pi x)}$. Here we choose $c(x)=1+x^2$.}
    \label{fig:test seg green and pde}
\end{figure}

We further validate the accuracy of our approximation by solving PDEs with specified right-hand sides. Specifically, we test our approach using PDEs with solutions $u(x) = \sin{(\pi x)}$ and $u(x) = \sin{(3\pi x)}$. We then compare the exact solutions with the numerical solutions derived from the approximated Green’s function. As shown in \cref{fig:test seg green and pde}, our network's output provides significantly better approximations to the true solution in both cases.

Next, we extend our investigations to high-dimensional problems. In the upcoming experiment, we address 2D model problems on the domains $[-1, 1]^2$ and the unit circle, with $c(\vx) = 1 + x_1^2 + x_2^2$. For this test, the domain is divided into 32 subdomains, and $\varepsilon$ is set to 0.02.
We use networks with three hidden layers, with 50 neurons per layer for $\phi^l$ and $\phi^s$, 30 neurons for $\phi^m_1$, and 40 for $\phi^m_2$. When generating the datasets, 50 values of $\vy$ are selected, each associated with 20 boundary points, 200 uniformly distributed points within the domain, and 200 additional points near $\vy$.

Each large-scale model within the multiscale network $\phi^m$ is trained for 10,000 iterations, followed by 40,000 iterations for the full multiscale network. In comparison, the baseline single-scale networks $\phi^s$ and $\phi^l$ are trained for 50,000 iterations.
We assess the effectiveness of our approximation using PDEs with solutions $u(\vx) = \sin{(\pi x_1)}\sin{(\pi x_2)}$ and $u(\vx) = \sin{(3\pi x_1)}\sin{(3\pi x_2)}$ for the rectangular domain, and $u(\vx) = 1 - x_1^2 - x_2^2$ and $u(\vx) = (1 - x_1^2 - x_2^2) \sin{(3\pi x_1)}\sin{(3\pi x_2)}$ for the unit circle domain.
The results, shown in \cref{fig:test4 sol}, demonstrate that our network achieves a much closer approximation to the true solution with significantly lower training costs, highlighting the efficiency of our framework.

\begin{figure}[h!]
    \centering
    \includegraphics[width=0.96\linewidth]{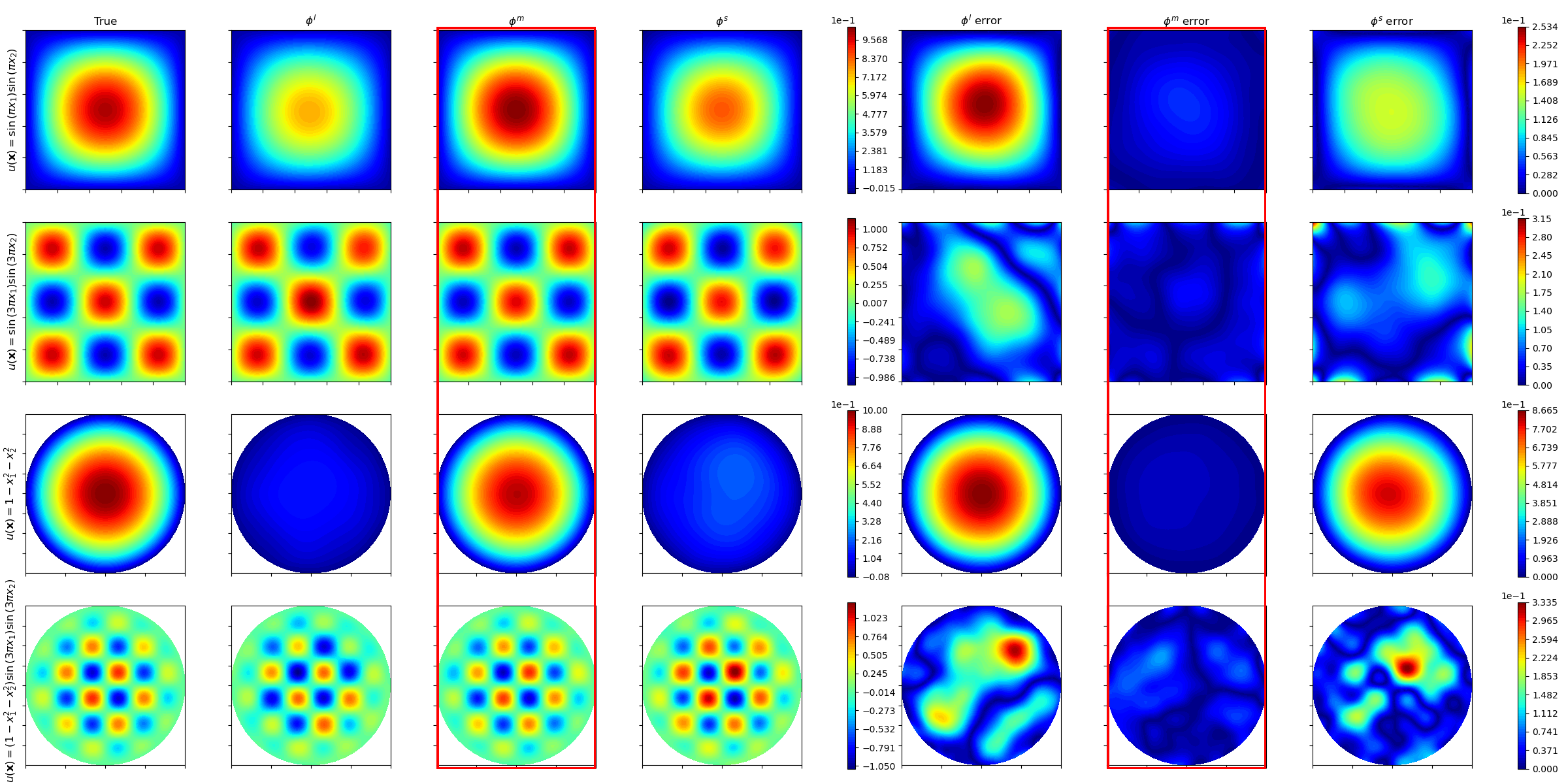}
    \caption{2D model problem with $c(\vx) = 1 + x_1^2 + x_2^2$. The exact solutions in the first column are $u(\vx) = \sin{(\pi x_1)}\sin{(\pi x_2)}$ and $u(\vx) = \sin{(3\pi x_1)}\sin{(3\pi x_2)}$ for the rectangular domain, and $u(\vx) = 1 - x_1^2 - x_2^2$ and $u(\vx) = (1 - x_1^2 - x_2^2)\sin{(3\pi x_1)}\sin{(3\pi x_2)}$ for the unit circle domain. Solutions obtained from different neural network frameworks are presented in the subsequent columns. The solutions from the multiscale network $\phi^m$ are highlighted with red boxes.}
    \label{fig:test4 sol}
\end{figure}

\section{Conclusion}
\label{conclusion}
In this paper, we propose a multiscale NN approach to learn Green's functions to solve PDE problems, consisting of both low-regularity and high-regularity components. We found that, compared to regular NNs, our approach requires fewer parameters and maintains the boundedness of the model parameters. Therefore, it can reduce training complexity and significantly speed up the training process, as demonstrated by both theoretical analysis and experimental performance. With the proposed approach, Green's functions 
can  be approximated
more effectively than a single-scale NN
because the large-scale NN can capture the large-scale (low-regularity) part and the small-scale neural networks is to approximate the residual high-regularity components.

While the theoretical results provide an existence proof for multiscale neural networks, a constructive proof would be valuable to verify the underlying assumptions and assist in designing neural networks for experimental purposes. We view this as a promising direction for future research. Furthermore, we intend to utilize the neural network approximation of the Green's function to develop more parallel and robust preconditioners than those based on sequential ILU factorizations \cite{tianshi2} for solving highly indefinite problems, such as high-frequency wave equations in our future work. {Finally, while this work focuses on a two‐scale architecture, the framework naturally extends to additional scales in settings where the Green’s function exhibits more complex multiscale structure.}
\section*{Acknowledgments}
The authors would like to thank Bei Hu and Qingguo Hong for many helpful discussions. 
The authors are also grateful to the two anonymous referees for their careful reading 
and valuable comments, which have helped improve the presentation of this paper.

\bibliographystyle{siamplain}
\bibliography{main}

\end{document}